\newcommand{\E}{\mathcal{E}}
\newcommand{\M}{\mathcal{M}}
\newcommand{\C}{\mathcal{C}}
\newcommand{\W}{\mathcal{W}}
\newcommand{\JR}{\textsf{JR}}
\newcommand{\Alg}{\mathrm{\mathcal{A}lg}}
\newcommand{\JRAlg}{\mathrm{\mathcal{A}lg}_{\mathsf{JR}}}
\newcommand{\Lens}{\mathrm{\mathcal{L}ens}}
\newcommand{\Cat}{\mathrm{\mathcal{C}at}}
\newcommand{\CAT}{\mathrm{\mathcal{C}AT}}
\newcommand{\Set}{\mathrm{\mathcal{S}et}}
\mathchardef\mhyphen="2D
\DeclareMathOperator{\cod}{cod}
\newtheorem{theorem}{Theorem}
\newtheorem{proposition}[theorem]{Proposition}
\newtheorem{lemma}[theorem]{Lemma}
\newtheorem{corollary}[theorem]{Corollary}
\theoremstyle{definition} 
\newtheorem{definition}[theorem]{Definition}
\newtheorem{example}[theorem]{Example}
\newtheorem{notation}[theorem]{Notation}
\theoremstyle{remark}
\newtheorem{remark}[theorem]{Remark}
\title{The Algebraic Weak Factorisation System for Delta Lenses}
\author{
	Bryce Clarke
	\institute{Inria Saclay\\ Palaiseau, France}
	\email{bryce.clarke@inria.fr}
	}
\begin{document}
\maketitle

\begin{abstract}
Delta lenses are functors equipped with a suitable choice of lifts, 
and are used to model bidirectional transformations between systems. 
In this paper, we construct an algebraic weak factorisation system
whose $R$-algebras are delta lenses. 
Our approach extends a semi-monad for delta lenses previously 
introduced by Johnson and Rosebrugh, and 
generalises to any suitable category 
equipped with an orthogonal factorisation system 
and an idempotent comonad.
We demonstrate how the framework of an algebraic weak factorisation 
system provides a natural setting for understanding the lifting 
operation of a delta lens, and also present an explicit description 
of the free delta lens on a functor.
\end{abstract}

\section{Introduction}
\label{section:introduction}

Delta lenses were first introduced by Diskin, Xiong, and Czarnecki 
\cite{DXC11} as an algebraic framework for 
\emph{bidirectional transformations} \cite{ACGMS18, CFHLST09} 
between systems, particularly in the context of 
\emph{model-driven engineering} \cite{DM12, Sil15}. 
The original motivation behind delta lenses came from adapting
the classical notion of a lens \cite{FGMPS07} from a ``state-based'' 
setting to a ``delta-based'' setting.
Instead of treating a system as a mere \emph{set} of states, 
it should be regarded as a \emph{category}, whose objects are the states 
of the system and whose morphisms are the updates (or deltas) 
between them. 
The purpose of delta lenses is to model the notion of 
\emph{synchronisation} between systems through specifying how certain
updates between states are propagated. 

A delta lens is a functor 
$f \colon A \rightarrow B$ equipped with a \emph{lifting operation}, see~\eqref{equation:delta-lenses-lifting}, that satisfies certain axioms.
The lifting operation specifies, for each object $a$ in $A$ 
and for each morphism $u \colon fa \rightarrow b$ in $B$, 
a morphism $\varphi(a, u) \colon a \rightarrow a'$, often called
the \emph{chosen lift}, such that
$f\varphi(a, u) = u$. 
The axioms placed on the lifting operation ensure that it respects 
identities and composition.
Thus a delta lens is a functor equipped with additional 
\emph{algebraic structure}, and it is natural to wonder if 
delta lenses arise as algebras for a monad. 
In this paper, we provide an answer in the affirmative. 
\begin{equation}
\label{equation:delta-lenses-lifting}
	\begin{tikzcd}[/tikz/column 1/.append style={anchor=base west}]
	\{ 0 \}
	\arrow[r, "a"]
	\arrow[d, end anchor = 137]
	&
	A
	\arrow[d, "f"]
	\\
	\{0 \rightarrow 1 \}
	\arrow[r, "u"]
	\arrow[ru, dashed, "{\varphi(a,\, u)}" description, start anchor = north]
	&
	B
	\end{tikzcd}
\end{equation}

The question of asking whether certain kinds of lenses are algebras for 
a monad is not new. 
Classical state-based lenses \cite{FGMPS07} were 
characterised by Johnson, Rosebrugh, and Wood \cite{JRW10} as algebras
for a monad on the slice category $\Set / B$. 
The same authors later introduced the notion of a \emph{c-lens}
\cite{JRW12}, better known as a \emph{split opfibration}, and 
characterised them as algebras for a monad, first introduced by 
Street~\cite{Str74}, on the slice category $\Cat / B$.
Delta lenses generalise state-based lenses and split opfibrations~\cite{JR16}, 
however they were only shown by Johnson and Rosebrugh \cite{JR13} 
to be certain algebras for a \emph{semi-monad} 
(a monad without a unit) on $\Cat / B$. 
One of the contributions of the current paper is resolve this 
gap in the literature.

Although it is generally useful to know when a mathematical 
structure arises as an algebra for a monad, in isolation
this result provides limited benefit 
towards a deeper understanding of lenses.
One reason is that we wish to study lenses as 
\emph{morphisms} of a category, 
rather than \emph{objects} in a category of algebras.
The knowledge that lenses are morphisms with algebraic structure 
does not provide any information of how to sequentially
\emph{compose} them, 
nor justification for why this algebraic structure encodes 
a notion of \emph{lifting}. 

Cofunctors\footnote{The term \emph{retrofunctor} proposed by Di Meglio \cite{DiM21} is preferred, but not yet in widespread use.} 
are a natural kind of morphism between categories \cite{Agu97, HM93} 
which fundamentally involve a lifting operation
and admit a straightforward sequential composition.  
The characterisation of delta lenses as a compatible functor 
and cofunctor \cite{AU17, Cla20}, together with related 
characterisations of state-based lenses and split opfibrations 
\cite{TAC}, 
provides a clear understanding of their composition and lifting, 
and has led to several fruitful developments in the study 
of lenses in applied category theory \cite{CCJSWZ22, ACT2020, DiM21}.
However the question remains: why do lenses frequently arise
as algebras for a monad? 

An \emph{algebraic weak factorisation system} \cite{BG16}, 
also known as a \emph{natural weak factorisation system} \cite{GT06},
generalises the notion of an orthogonal factorisation system 
(\textsc{ofs}) on a category. 
An algebraic weak factorisation system (\textsc{awfs}) on a 
category $\C$ consists of a comonad $(L, \epsilon, \Delta)$ 
and a monad $(R, \eta, \mu)$ on~$\C^{\mathbf{2}}$ 
that are suitably compatible. 
The categories of \emph{$L$-coalgebras} and \emph{$R$-algebras} 
of an \textsc{awfs} $(L, R)$ 
replace the usual \emph{left} and \emph{right} classes of morphisms of 
an \textsc{ofs}. 
In particular, every morphism
factors into a cofree $L$-coalgebra followed by free $R$-algebra,
and every lifting problem \eqref{equation:lifting-problem},
where $(f, p)$ is a $L$-coalgebra and $(g, q)$ is a $R$-algebra,
admits a chosen lift 
$\varphi_{f, g}\langle h, k \rangle$ making the diagram commute.
Crucially, these chosen lifts also induce a canonical composition 
of $R$-algebras \cite[Section~2.8]{BG16}.
Both classical state-based lenses 
and split opfibrations 
arise as $R$-algebras for an \textsc{awfs} on $\Set$ and $\Cat$, respectively.
\begin{equation}
\label{equation:lifting-problem}
	\begin{tikzcd}[column sep = large, row sep = large]
	A
	\arrow[r, "h"]
	\arrow[d, "{(f,\, p)}"']
	&
	C
	\arrow[d, "{(g,\, q)}"]
	\\
	B
	\arrow[r, "k"']
	\arrow[ru, dashed, "{\varphi_{\,f,\, g}\langle h,\, k \rangle}" description]
	&
	D
	\end{tikzcd}
\end{equation}

The main contribution of this paper is to construct an algebraic 
weak factorisation system $(L, R)$ on $\Cat$ whose $R$-algebras are 
precisely delta lenses. 
The principal benefit is a new framework for understanding lenses as 
algebras for a monad that naturally incorporates the fundamental 
aspects of composition and lifting. 
In addition, we are able to generalise the notion of 
delta lens to any suitable category equipped with an 
orthogonal factorisation system and idempotent comonad, 
as well as present an explicit description of the free delta lens 
on the functor.
This approach to lenses as algebras for a monad also 
highlights an interesting duality with their recent 
characterisation as coalgebras for a comonad \cite{Cla21}.

\textbf{Overview of the paper.} 
In Section~\ref{section:background} we review the necessary 
background material on delta lenses and factorisation systems. 
In particular, we recall two important structures on $\Cat$, 
the \emph{comprehensive factorisation system} 
(Example~\ref{example:comprehensive-factorisation-system})
and the \emph{discrete category comonad}
(Example~\ref{example:boo}), 
which are generalised in our main constructions 
to an orthogonal factorisation system and an idempotent comonad,
respectively.
In Section~\ref{section:algebras-for-a-semi-monad} 
we utilise these structures on a category $\C$ 
to build a semi-monad on $\C^{\mathbf{2}}$ 
(Proposition~\ref{proposition:semi-monad}), 
and show that when $\C = \Cat$ (Example~\ref{example:JR-algebras}), 
we recover delta lenses as certain algebras for this semi-monad
(Theorem~\ref{theorem:JR-isomorphism} and Appendix~\ref{section:appendix}). 
In Section~\ref{section:algebras-for-a-monad} we  
enhance this construction to a monad 
(Theorem~\ref{theorem:monad-for-lenses}) using pushouts in $\C$, 
and prove that when $\C = \Cat$, the algebras for this monad 
are delta lenses 
(Theorem~\ref{theorem:delta-lenses-are-algebras}).
We also describe the free delta lens on a functor (Example~\ref{example:free-delta-lens}). 
Section~\ref{section:awfs} completes the construction of an 
algebraic weak factorisation system on $\C$ 
(Theorem~\ref{theorem:awfs}) 
and shows how delta lenses lift against the $L$-coalgebras
when $\C = \Cat$.
Section~\ref{section:conclusion} presents some concluding remarks
and avenues for future work.

\section{Background}
\label{section:background}

\subsection{Delta lenses} 

We introduce the category $\Lens$ whose objects are delta lenses, 
which we will later show is the category of algebras for a monad
on $\Cat^{\mathbf{2}}$. 
For further details and examples, 
we refer the reader to \cite[Chapter~2]{Cla22}.

\begin{definition}
\label{definition:delta-lens}
A \emph{delta lens} $(f, \varphi) \colon A \rightarrow B$ consists of 
a functor $f \colon A \rightarrow B$ together with a 
\emph{lifting operation}
\[
	(a \in A, u \colon fa \rightarrow b \in B) 
	\qquad \longmapsto \qquad
	\varphi(a, u) \colon a \rightarrow p(a, u),
\]
where $p(a, u) = \cod(\varphi(a, u))$, that satisfies the following 
three axioms: 
\begin{enumerate}[(L1)]
    \item \label{(L1)} $f\varphi(a, u) = u$ 
    \item \label{(L2)} $\varphi(a, 1_{fa}) = 1_{a}$
    \item \label{(L3)} $\varphi(a, v \circ u) = \varphi(p(a, u), v) \circ \varphi(a, u)$
\end{enumerate}
\end{definition}

\begin{example}
\label{example:discrete-opfibration}
A \emph{discrete opfibration} is a functor $f \colon A \rightarrow B$
such that for each pair $(a \in A, u \colon fa \rightarrow b \in B)$ 
there is a unique morphism 
$\bar{f}(a, u) \colon a \rightarrow a'$ in $A$
for which $f\bar{f}(a, u) = u$. 
Thus each discrete opfibration $f$ admits a unique lifting operation 
$\bar{f}$ such that the pair $(f, \bar{f}\,)$ is a delta lens.
Conversely, the underlying functor $f$ of a delta lens $(f, \varphi)$ 
is a discrete opfibration if $\varphi(a, fw) = w$ for all morphisms
$w \colon a \rightarrow a'$ in~$A$. 
\end{example}

\begin{definition}
\label{definition:category-Lens}
Let $\Lens$ denote the category whose objects are delta lenses 
and whose morphisms
$\langle h, k \rangle$ from $(f, \varphi)$ to $(g, \psi)$ 
consist of a pair of functors $h$ and $k$ such that 
$k \circ f = g \circ h$ and $h\varphi(a, u) = \psi(ha, ku)$. 
\begin{equation*}
    \begin{tikzcd}
    A 
    \arrow[r, "h"]
    \arrow[d, "{(f,\, \varphi)}"']
    &
    C
    \arrow[d, "{(g,\, \psi)}"]
    \\
    B
    \arrow[r, "k"']
    &
    D
    \end{tikzcd}
\qquad \leftrightsquigarrow \qquad 
	\begin{tikzcd}[/tikz/column 1/.append style={anchor=base west}]
	\{ 0 \}
	\arrow[r, "a"]
	\arrow[d, end anchor = 137]
	&
	A
	\arrow[d, "f"]
	\arrow[r, "h"]
	& 
	C
	\arrow[d, "g"]
	\\[-5pt]
	\{0 \rightarrow 1 \}
	\arrow[r, "u"]
	\arrow[ru, dashed, "{\varphi(a,\, u)}" description, start anchor = north]
	&
	B
	\arrow[r, "k"]
	&
	D
	\end{tikzcd}
\quad = \quad 
	\begin{tikzcd}[/tikz/column 1/.append style={anchor=base west}]
	\{ 0 \}
	\arrow[r, "ha"]
	\arrow[d, end anchor = 137]
	&
	C
	\arrow[d, "g"]
	\\[-5pt]
	\{0 \rightarrow 1 \}
	\arrow[r, "ku"]
	\arrow[ru, dashed, "{\psi(ha,\, ku)}" description, start anchor = north]
	&
	D
	\end{tikzcd}
\end{equation*}
\end{definition}

Let $U \colon \Lens \rightarrow \Cat^{\mathbf{2}}$ denote the canonical
forgetful functor that sends $(f, \varphi)$ to $f$. 

\subsection{Factorisation systems}

We recall two related notions of factorisation system on
a category: \emph{orthogonal factorisations systems} \cite{FK72} and 
\emph{algebraic weak factorisation systems} \cite{BG16, GT06}.
For a full account, we refer the reader to \cite{BG16}.

\begin{definition}
\label{definition:ofs}
An \emph{orthogonal factorisation system} $(\E, \M)$ on a category $\C$
consists of two classes of morphisms $\E$ and $\M$, both containing the 
isomorphisms and closed under composition, such that: 
\begin{enumerate}[(i)]
\item \textbf{Factorisation}: Every morphism $f$ of $\C$ admits a factorisation 
$f = m \circ e$ with $e \in \E$ and $m \in \M$;
\item \textbf{Orthogonality}: For each solid commutative square in $\C$ 
below such that $e \in \E$ and $m \in \M$, 
there exists a unique morphism 
$h$ such that $f = h \circ e$ and $g = m \circ h$. 
\begin{equation*}
	\begin{tikzcd}
	A
	\arrow[r, "f"]
	\arrow[d, "e"']
	&
	C
	\arrow[d, "m"]
	\\
	B
	\arrow[r, "g"']
	\arrow[ru, dashed, "\exists!", "h"']
	& 
	D
	\end{tikzcd}
\end{equation*}
\end{enumerate}
\end{definition}

\begin{notation}
\label{notation:ofs-decoration}
As an aid when diagram-chasing, 
the morphisms in the left class~$\E$ and the right class~$\M$ of an 
orthogonal factorisation system on $\C$ 
will be decorated in the remainder of the paper as follows. 
\begin{equation*}
	\begin{tikzcd}[column sep = large]
	\bullet
	\arrow[r, two heads, "e\, \in\, \E"]
	&
	\bullet
	\end{tikzcd}
\qquad \qquad
	\begin{tikzcd}[column sep = large]
	\bullet
	\arrow[r, tail, "m\, \in\, \M"]
	&
	\bullet
	\end{tikzcd}
\end{equation*}
\end{notation}

\begin{example}
\label{example:comprehensive-factorisation-system}
A functor $f \colon A \rightarrow B$ is called \emph{initial} 
if, for each object $b \in B$, the comma category $f / b$ is connected.
The \emph{comprehensive factorisation system}~\cite{SW73} 
is an orthogonal factorisation system $(\E, \M)$ on $\Cat$
in which $\E$ is the class of initial functors and $\M$ is the class
of discrete opfibrations.
\end{example}

\begin{lemma}
\label{lemma:closure-properties}
If $(\E, \M)$ be an orthogonal factorisation system on $\C$,
then the following properties hold: 
\begin{enumerate}[(1)]
\item The class $\E$ is stable under pushouts in $\C$.
\item If $g \circ f$ and $f$ are in $\E$, then $g$ is in $\E$. 
Dually, if $g \circ f$ and $g$ are in $\M$, then $f$ is in $\M$. 
\end{enumerate}
\end{lemma}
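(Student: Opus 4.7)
The plan is to prove both parts by pure diagram chases using the orthogonality condition; the key technical move is to transport a lifting problem across the pushout (for part~(1)) or across the cancelled morphism (for part~(2)), solve it there by orthogonality, and then rebuild the desired lift using a universal property.

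For part (1), suppose $e \colon A \twoheadrightarrow B$ lies in $\E$ and form its pushout along some $u \colon A \to C$, yielding $e' \colon C \to D$ and $u' \colon B \to D$ with $e' \circ u = u' \circ e$. To show $e' \in \E$, take any $m \colon X \rightarrowtail Y$ in $\M$ and a commutative square with sides $f \colon C \to X$ and $g \colon D \to Y$ satisfying $m \circ f = g \circ e'$. Precomposing the outer square with $u$ gives $m \circ (f \circ u) = (g \circ u') \circ e$, so orthogonality of $e$ against $m$ produces a unique $k \colon B \to X$ with $k \circ e = f \circ u$ and $m \circ k = g \circ u'$. The pair $(k, f)$ is then compatible over the pushout span, so the universal property of the pushout yields a unique $h \colon D \to X$ with $h \circ u' = k$ and $h \circ e' = f$. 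Checking $m \circ h = g$ reduces, by the pushout's uniqueness clause, to verifying the two equalities $m \circ h \circ u' = g \circ u'$ and $m \circ h \circ e' = g \circ e'$, both of which are immediate. Uniqueness of $h$ again follows by combining the uniqueness in the orthogonality of $e$ against $m$ with the uniqueness in the pushout.

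For part (2), assume $g \circ f$ and $f$ both lie in $\E$ with $f \colon A \to B$ and $g \colon B \to C$. Given a lifting problem for $g$ against some $m \in \M$, with top and bottom sides $p \colon B \to X$ and $q \colon C \to Y$ satisfying $m \circ p = q \circ g$, precompose with $f$ to obtain a lifting problem for $g \circ f$ against $m$; orthogonality supplies a unique $h \colon C \to X$ with $h \circ (g \circ f) = p \circ f$ and $m \circ h = q$. To conclude $h \circ g = p$, observe that both $h \circ g$ and $p$ are candidate fillers for the lifting problem of $f$ against $m$ with top $p \circ f$ and bottom $q \circ g$; uniqueness in orthogonality of $f$ against $m$ forces them to agree. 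Uniqueness of $h$ as a lift for $g$ follows by rerunning the argument with a competing candidate. The dual statement, that $g \circ f, g \in \M$ implies $f \in \M$, is obtained by reversing all arrows and reading the argument in $\C^{\mathrm{op}}$, where $(\M, \E)$ is an orthogonal factorisation system.

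I do not expect genuine obstacles here: both statements are part of the folklore of factorisation systems, and the only care needed is in bookkeeping the commutativity conditions and in invoking uniqueness of fillers at the right moments. The most error-prone step is the verification that the candidate $h$ constructed from the pushout in~(1) actually satisfies $m \circ h = g$, since this relies on decomposing a morphism out of a pushout into its two components, but no new ideas beyond the definition of $(\E, \M)$ and the universal property of pushouts are required.
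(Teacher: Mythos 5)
The paper states this lemma without proof (it is folklore for orthogonal factorisation systems, going back to \cite{FK72}), so there is no in-paper argument to compare against; your diagram chases for both parts are the standard ones and are carried out correctly. The one point you should make explicit is the final inference in each part: your arguments establish that $e'$ (respectively $g$) has the \emph{unique left lifting property} against every $m \in \M$, but Definition~\ref{definition:ofs} only asserts orthogonality in one direction --- it does not, as stated, say that every morphism left orthogonal to all of $\M$ belongs to $\E$. To conclude membership in $\E$ you need the standard fact that for an orthogonal factorisation system the two classes determine each other, i.e.\ $\E = {}^{\perp}\M$. This follows from the axioms you are given by a short retract argument: if $f$ is left orthogonal to all of $\M$, factor $f = m \circ e$ with $e \in \E$, $m \in \M$; lifting $f$ against $m$ in the square with top $e$ and bottom the identity yields $d$ with $d \circ f = e$ and $m \circ d = 1$; then $d \circ m$ and the identity are both fillers for the square $m \circ e = m \circ e$ (with $e$ on the left and $m$ on the right), so $d \circ m = 1$ by uniqueness, whence $m$ is invertible and $f = m \circ e \in \E$ since $\E$ contains the isomorphisms and is closed under composition. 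With that one-line lemma added (and its dual for $\M = \E^{\perp}$, which you need for the second half of part~(2)), your proof is complete.
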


\begin{definition}
\label{definition:functorial-factorisation}
A \emph{functorial factorisation} $(L, E, R)$ on a category $\C$ is a 
section $(L, E, R) \colon \C^{\mathbf{2}} \rightarrow \C^{\mathbf{3}}$ 
to the composition functor 
$\C^{\mathbf{3}} \rightarrow \C^{\mathbf{2}}$. 
The factorisation of a morphism in 
$\C^{\mathbf{2}}$ is denoted as follows. 
\begin{equation*}
	\begin{tikzcd}
	A
	\arrow[r, "h"]
	\arrow[d, "f"']
	&
	C
	\arrow[d, "g"]
	\\
	B
	\arrow[r, "k"']
	& 
	D
	\end{tikzcd}
\qquad \longmapsto \qquad
	\begin{tikzcd}
	A
	\arrow[r, "h"]
	\arrow[d, "Lf"']
	\arrow[dd, bend right = 50, "f"']
	&
	C
	\arrow[d, "Lg"]
	\arrow[dd, bend left = 50, "g"]
	\\
	Ef
	\arrow[r, "{E\langle h,\, k \rangle}"]
	\arrow[d, "Rf"']
	& 
	Eg
	\arrow[d, "Rg"]
	\\
	B
	\arrow[r, "k"']
	&
	D
	\end{tikzcd}
\end{equation*}
\end{definition}

\begin{remark}
\label{remark:functorial-factorisation}
Each functorial factorisation $(L, E, R)$ on $\C$ induces 
a copointed endofunctor $(L, \epsilon)$ 
and a pointed endofunctor $(R, \eta)$ on $\C^{\mathbf{2}}$, 
where the components of
$\epsilon \colon L \Rightarrow 1$ and 
$\eta \colon 1 \Rightarrow R$ at $f$
are given below. 
\begin{equation}
\label{equation:epsilon-eta}
	\begin{tikzcd}
	A
	\arrow[r, equal]
	\arrow[d, "Lf"']
	& 
	A
	\arrow[d, "f"]
	\\
	Ef
	\arrow[r, "Rf"']
	&
	B
	\end{tikzcd}
\qquad \qquad
	\begin{tikzcd}
	A
	\arrow[r, "Lf"]
	\arrow[d, "f"']
	& 
	Ef
	\arrow[d, "Rf"]
	\\
	B
	\arrow[r, equal]
	&
	B
	\end{tikzcd}
\end{equation}
\end{remark}

\begin{definition}
\label{definition:awfs}
\cite[Section~2.2]{BG16} 
An \emph{algebraic weak factorisation system} $(L, R)$ on a category 
$\C$ consists of:
\begin{enumerate}[(i)]
\item A functorial factorisation $(L, E, R)$ on $\C$;
\item An extension of $(L, \epsilon)$ to a comonad $(L, \epsilon, \Delta)$ on $\C^{\mathbf{2}}$;
\item An extension of $(R, \eta)$ to a monad $(R, \eta, \mu)$ on $\C^{\mathbf{2}}$;
\item A distributive law $\lambda \colon LR \Rightarrow RL$  
of the comonad $L$
over the monad $R$ with $\lambda_{f} = \langle \Delta_{f}, \mu_{f} \rangle$. 
\end{enumerate}
\end{definition}

\subsection{Idempotent comonads and weak equivalences} 

Given an idempotent comonad $(M, \iota)$ on a category $\C$,
let $\W = \{ f \in \C \mid \text{$Mf$ is invertible} \}$
denote the class of morphisms in $\C$ whose members are called 
\emph{weak equivalences}. 
This class satisfies the \emph{2-out-of-3 property}, 
and contains the isomorphisms, thus making $\C$ a 
\emph{category with weak equivalences} \cite{DHKS04}.
Since the comonad $M$ is idempotent, each counit 
component $\iota_{A}$ is inverted by $M$ and therefore 
a morphism of~$\W$. 
If $M$ preserves pushouts, the morphisms in $\W$ are stable under
pushout along morphisms in $\C$.

\begin{notation}
\label{notation:we-decoration}
As a visual aid when diagram-chasing, 
the morphisms in the class $\W$ of weak equivalences of a 
category $\C$ will be decorated in the remainder of the paper 
as follows. 
\begin{equation*}
	\begin{tikzcd}[column sep = large]
	\bullet 
	\arrow[r, "\sim"', "w\, \in\, \W"]
	&
	\bullet
	\end{tikzcd}
\end{equation*}
\end{notation}

\begin{example}
\label{example:boo}
Let $(-)_{0} \colon \Cat \rightarrow \Cat$ denote the idempotent 
comonad that assigns a category $A$ to its corresponding \emph{discrete
category} $A_{0}$ with counit component 
$\iota_{A} \colon A_{0} \rightarrow A$. 
The endofunctor $(-)_{0}$ has a right adjoint 
(the \emph{codiscrete category} monad) 
and therefore preserves all colimits.
A functor $f \colon A \rightarrow B$ is called 
\emph{bijective-on-objects} if $f_{0}$ is invertible;
these are the weak equivalences with respect to~$(-)_{0}$. 
\end{example}

\section{Delta lenses as certain algebras for a semi-monad}
\label{section:algebras-for-a-semi-monad}

Throughout this section, 
let $(\E, \M)$ be an orthogonal factorisation system on a category $\C$,
and let $(M, \iota)$ be an idempotent comonad on 
$\C$ with corresponding class $\W$ of weak equivalences.

\subsection{Constructing a semi-monad for delta lenses}

We now construct a semi-monad $(T, \nu)$ on the category 
$\C^{\mathbf{2}}$, for a category $\C$ equipped with an idempotent comonad $(M, \iota)$ and an orthogonal factorisation system 
$(\E, \M)$. 
We show that when $\C = \Cat$ equipped with the discrete category 
comonad and the comprehensive factorisation system, 
this specialises to the semi-monad defined on $\Cat^{\mathbf{2}}$ 
by Johnson and Rosebrugh \cite[Section~6]{JR13}.

We begin by constructing an endofunctor 
$T \colon \C^{\mathbf{2}} \rightarrow \C^{\mathbf{2}}$. 
Given a morphism $f \colon A \rightarrow B$ in $\C$, we first 
pre-compose with the counit component 
$\iota_{A} \colon MA \rightarrow A$ and then choose an 
$(\E, \M)$-factorisation of the resulting morphism as depicted 
in commutative square~(i) below; this defines the action of $T$ on 
objects in $\C^{\mathbf{2}}$. 
Given a morphism $\langle h, k \rangle \colon f \rightarrow g$ in
$\C^{\mathbf{2}}$, there exists a unique
morphism $J \langle h, k \rangle \colon Jf \rightarrow Jg$ in $\C$ 
by applying the orthogonality property; 
the action of $T$ on the morphism $\langle h, k \rangle$ is given by 
the commutative square~(ii) depicted below. 
Note that the equation \eqref{equation:endofunctor-T} holds by 
naturality of $\iota \colon M \Rightarrow 1$ at the morphism~$h$. 
\begin{equation}
\label{equation:endofunctor-T}
	\begin{tikzcd}
	MA
	\arrow[r, "\iota_{A}", "\sim"']
	\arrow[d, two heads, "Sf"']
	\arrow[rdd, phantom, "\text{(i)}"]
	& 
	A
	\arrow[dd, "f"']
	\arrow[r, "h"]
	& 
	C
	\arrow[dd, "g"]
	\\
	Jf
	\arrow[d, tail, "Tf"']
	& 
	\\
	B
	\arrow[r, equal]
	&
	B
	\arrow[r, "k"']
	& 
	D
	\end{tikzcd}
\qquad = \qquad
	\begin{tikzcd}
	MA
	\arrow[d, two heads, "Sf"']
	\arrow[r, "Mh"]
	&
	MC
	\arrow[d, two heads, "Sg"]
	\arrow[r, "\iota_{C}", "\sim"']
	&
	C
	\arrow[dd, "g"]
	\\
	Jf
	\arrow[d, tail, "Tf"']
	\arrow[r, dashed, "{J\langle h,\,k \rangle}"]
	\arrow[rd, phantom, "\text{(ii)}"]
	& 
	Jg
	\arrow[d, tail, "Tg"]
	& 
	\\
	B
	\arrow[r, "k"']
	& 
	C
	\arrow[r, equal]
	&
	C
	\end{tikzcd}
\end{equation}

Applying the functor $T$ to the morphism 
$Tf \colon Jf \rightarrow B$ and using the orthogonality property, 
we obtain the component $\nu_{f}$ of the multiplication 
$\nu \colon T^{2} \Rightarrow T$ at $f$ as depicted in the commutative
square~(iii) below. 
Naturality of $\nu$ at follows from noticing in 
\eqref{equation:semi-monad-naturality} that 
$J \langle h, k \rangle \circ \nu_{f} 
= \nu_{g} \circ  J\langle J\langle h, k \rangle, k \rangle$ by orthogonality.
\begin{equation}
\label{equation:semi-monad-naturality}
	\begin{tikzcd}
	MJf
	\arrow[d, two heads, "STf"']
	\arrow[r, "\iota_{Jf}", "\sim"']
	& 
	Jf
	\arrow[dd, tail, "Tf"]
	\arrow[r, "{J\langle h,\,k \rangle}"]
	&
	Jg
	\arrow[dd, tail, "Tg"]
	\\
	JTf
	\arrow[d, tail, "T^{2}f"']
	\arrow[ru, tail, dashed, "\nu_{f}"']
	\arrow[rd, phantom, "\text{(iii)}"{yshift=5pt}]
	& 
	&
	\\
	B
	\arrow[r, equal]
	&
	B
	\arrow[r, "k"']
	&
	D
	\end{tikzcd}
\qquad = \qquad
	\begin{tikzcd}[column sep = large]
	MJf
	\arrow[d, two heads, "STf"']
	\arrow[r, "{MJ\langle h,\,k \rangle}"]
	&
	MJg
	\arrow[r, "\iota_{Jg}", "\sim"']
	\arrow[d, two heads, "STg"']
	& 
	Jg
	\arrow[dd, tail, "Tg"]
	\\
	JTf
	\arrow[r, dashed, "{J\langle J\langle h,\, k \rangle,\, k \rangle}"]
	\arrow[d, tail, "T^{2}f"']
	&
	JTg
	\arrow[d, tail, "T^{2} g"']
	\arrow[ru, dashed, tail, "\nu_{g}"']
	& 
	\\
	B
	\arrow[r, "k"']
	&
	D
	\arrow[r, equal]
	&
	D
	\end{tikzcd}
\end{equation}
The associative law for $\nu$ follows from 
observing in \eqref{equation:semi-monad-associativity} that 
$\nu_{f} \circ \nu_{Tf} = \nu_{f} \circ J\langle \nu_{f}, 1_{B} \rangle$
by orthogonality. 
\begin{equation}
\label{equation:semi-monad-associativity}
	\begin{tikzcd}
	MJTf
	\arrow[d, two heads, "ST^{2}f"']
	\arrow[r, "\iota_{JTf}", "\sim"']
	& 
	JTf
	\arrow[dd, tail, "T^{2}f"]
	\arrow[r, tail, "\nu_{f}"]
	&
	Jf
	\arrow[dd, tail, "Tf"]
	\\
	JT^{2}f
	\arrow[d, tail, "T^{3}f"']
	\arrow[ru, tail, dashed, "\nu_{Tf}"']
	& 
	&
	\\
	B
	\arrow[r, equal]
	&
	B
	\arrow[r, equal]
	&
	B
	\end{tikzcd}
\qquad = \qquad
	\begin{tikzcd}[column sep = large]
	MJTf
	\arrow[d, two heads, "ST^{2}f"']
	\arrow[r, "M\nu_{f}"]
	&
	MJf
	\arrow[r, "\iota_{Jf}", "\sim"']
	\arrow[d, two heads, "STf"']
	& 
	Jf
	\arrow[dd, tail, "Tf"]
	\\
	JT^{2}f
	\arrow[r, tail, dashed, "{J\langle \nu_{f},\, 1_{B} \rangle}"]
	\arrow[d, tail, "T^{3}f"']
	&
	JTf
	\arrow[d, tail, "T^{2} f"']
	\arrow[ru, dashed, tail, "\nu_{f}"']
	& 
	\\
	B
	\arrow[r, equal]
	&
	B
	\arrow[r, equal]
	&
	B
	\end{tikzcd}
\end{equation}

We have thus constructed an endofunctor 
$T \colon \C^{\mathbf{2}} \rightarrow \C^{\mathbf{2}}$ 
with an associative multiplication $\nu \colon T^{2} \Rightarrow T$.

\begin{proposition}
\label{proposition:semi-monad}
The pair $(T, \nu)$ is a semi-monad on $\C^{2}$. 
\end{proposition}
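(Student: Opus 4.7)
The plan is to verify the three required conditions in turn — functoriality of $T$, naturality of $\nu$, and associativity of $\nu$ — each by invoking the uniqueness half of the orthogonality property, which is the engine already set in motion by the constructions in~\eqref{equation:endofunctor-T} and~\eqref{equation:semi-monad-naturality}. Nothing new is needed beyond the data already introduced: every equation to be checked is between two morphisms that turn out to be diagonal fillers of the same lifting problem.

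First I would confirm that $T \colon \C^{\mathbf{2}} \to \C^{\mathbf{2}}$ is an endofunctor. The morphism $J\langle h, k\rangle$ is defined in square~(ii) of~\eqref{equation:endofunctor-T} as the unique diagonal filler for a lifting problem with $Sf \in \E$ on the left and $Tg \in \M$ on the right. Preservation of identities and of composition of morphisms in $\C^{\mathbf{2}}$ is then immediate: the identity and the relevant composite are both fillers of the corresponding problems, and the orthogonality property forces them to coincide with $J(1_f) = 1_{Jf}$ and with $J\langle h', k'\rangle \circ J\langle h, k\rangle$, respectively. That $Tf$ and $Sf$ assemble into morphisms of $\C^{\mathbf{2}}$ is built into the construction.

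For naturality of $\nu$, the component $\nu_f$ is defined as the unique filler of the square labelled (iii). To check the naturality square, I would show that both $J\langle h, k\rangle \circ \nu_f$ and $\nu_g \circ J\langle J\langle h, k\rangle, k\rangle$ are diagonal fillers of one and the same orthogonality problem with $ST^{2}f \in \E$ on the left and $Tg \in \M$ on the right, whose top and bottom edges are assembled as in~\eqref{equation:semi-monad-naturality} using naturality of $\iota \colon M \Rightarrow 1$ at $J\langle h, k\rangle$ together with the defining equations of $J\langle J\langle h,k\rangle, k\rangle$ and of $\nu_f$, $\nu_g$. Uniqueness then forces the two composites to agree. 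Associativity is handled by exactly the same pattern in~\eqref{equation:semi-monad-associativity}: both $\nu_f \circ \nu_{Tf}$ and $\nu_f \circ J\langle \nu_f, 1_B\rangle$ are fillers of a single orthogonality problem with $ST^{2}f \in \E$ on the left and $Tf \in \M$ on the right, so they must coincide.

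The main obstacle is not conceptual but diagrammatic: in each uniqueness argument one must carefully identify the correct outer square and verify that both candidate morphisms make the upper and lower triangles commute. The upper-triangle commutativities invariably reduce to naturality of $\iota$ together with the defining equations of the various $J\langle-,-\rangle$, while the lower-triangle commutativities follow from the defining equations of $Tf$, $T^{2}f$ and $T^{3}f$ together with the commuting squares in~\eqref{equation:endofunctor-T}. Aside from this careful bookkeeping, the proof is mechanical.
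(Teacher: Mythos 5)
Your proposal is correct and follows essentially the same route as the paper: the paper likewise establishes naturality and associativity of $\nu$ by exhibiting both sides of each equation as diagonal fillers of a single lifting problem and invoking the uniqueness clause of orthogonality, exactly as in \eqref{equation:semi-monad-naturality} and \eqref{equation:semi-monad-associativity}, with functoriality of $T$ handled the same way. One small bookkeeping slip: in the naturality check the $\E$-morphism on the left of the lifting problem is $STf \colon MJf \twoheadrightarrow JTf$, not $ST^{2}f$ (whose codomain is $JT^{2}f$ rather than $JTf$, the common domain of the two composites being compared); $ST^{2}f$ is the correct left leg only in the associativity check.
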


\begin{corollary}
The semi-monad $(T, \nu)$ on $\C^{\mathbf{2}}$ restricts to a 
semi-monad in the $2$-category $\CAT / \C$ 
on the codomain functor $\cod \colon \C^{\mathbf{2}} \rightarrow \C$. 
In particular, $(T, \nu)$ induces a semi-monad on each slice 
category $\C / B$. 
\end{corollary}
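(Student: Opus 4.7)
The plan is to observe that the claim is essentially a recognition result: both pieces of structure, the endofunctor $T$ and the multiplication $\nu$, are already built in such a way that they strictly respect the codomain projection $\cod \colon \C^{\mathbf{2}} \to \C$, so no further construction is required. Recall that the 2-category $\CAT / \C$ has as objects the functors into $\C$, as 1-cells the strictly commuting triangles over $\C$, and as 2-cells those natural transformations whose whiskering with the projection to $\C$ is the identity. A semi-monad in this 2-category on the object $\cod$ therefore amounts to an endofunctor $T$ of $\C^{\mathbf{2}}$ satisfying $\cod \circ T = \cod$ strictly, together with an associative natural transformation $\nu \colon T^{2} \Rightarrow T$ whose components project to identities in $\C$. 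I would verify each of these two conditions in turn.

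For the 1-cell condition, square~(i) of \eqref{equation:endofunctor-T} produces $Tf \colon Jf \to B$, so $\cod(Tf) = B = \cod(f)$, and square~(ii) shows that $T\langle h, k\rangle$ is the morphism $\langle J\langle h, k\rangle, k\rangle$, whose codomain component is the same $k$ we began with; hence $\cod \circ T = \cod$ on the nose. For the 2-cell condition, square~(iii) of \eqref{equation:semi-monad-naturality} defines $\nu_{f}$ as a morphism in $\C^{\mathbf{2}}$ whose bottom edge is $1_{B}$, so $\cod(\nu_{f}) = 1_{B}$ for every $f$. Combined with the associativity and naturality already established in Proposition~\ref{proposition:semi-monad}, these two observations exhibit $(T, \nu)$ as a semi-monad in $\CAT / \C$ on $\cod$.

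For the final assertion, each slice $\C / B$ arises as the strict fibre of $\cod$ over $B$: it is the subcategory of $\C^{\mathbf{2}}$ consisting of those objects with codomain $B$ and those morphisms with identity codomain component. The two conditions just verified say precisely that $T$ restricts to an endofunctor of this fibre and that $\nu$ restricts to a natural transformation between the restrictions, with associativity inherited from $\C^{\mathbf{2}}$. The main obstacle, such as it is, is really just a bookkeeping one: one must be comfortable identifying the slice $\C / B$ with the corresponding fibre of $\cod$, after which the restriction of $(T, \nu)$ to a semi-monad on $\C/B$ is immediate from the semi-monad in $\CAT/\C$ on $\cod$ via the standard 2-functoriality of taking fibres.
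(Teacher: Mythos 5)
Your proposal is correct and matches the paper's (implicit) argument: the paper offers no proof for this corollary precisely because the constructions in \eqref{equation:endofunctor-T} and \eqref{equation:semi-monad-naturality} already exhibit $\cod \circ T = \cod$ and $\cod(\nu_f) = 1_B$ on the nose, which is exactly what you verify. The identification of $\C/B$ with the strict fibre of $\cod$ over $B$ is the standard bookkeeping step and your handling of it is fine.
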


\begin{example}
\label{example:JR-algebras}
Consider the category $\Cat$ equipped with the comprehensive factorisation system and the discrete category comonad. 
Given a functor $f \colon A \rightarrow B$,
the category $Jf$ defined in \eqref{equation:endofunctor-T} 
is given by the coproduct $\sum_{a \in A_{0}} fa / B$ of the coslice
categories indexed by the discrete category $A_{0}$. 
The objects in $Jf$ are pairs 
$(a \in A, u \colon fa \rightarrow b \in B)$,
while morphisms 
$\langle 1_{a}, v \rangle \colon (a, u_{1}) \rightarrow (a, u_{2})$ 
are given by morphisms $v \in B$ such that $u_{2} = v \circ u_{1}$. 
The functor $Sf \colon A_{0} \twoheadrightarrow J_{f}$ has an assignment
on objects $a \mapsto (a, 1_{fa})$, and is an \emph{initial functor}
since each slice category $Sf / (a, u)$ is isomorphic to the 
terminal category and hence connected.
The functor $Tf \colon Jf \rightarrowtail B$ is given by the codomain
projection with assignment on objects $(a, u) \mapsto \cod(u)$, and
is a \emph{discrete opfibration}. 
In this setting, 
restricting the semi-monad $(T, \nu)$ to the slice categories 
$\Cat/B$ coincides with semi-monad for delta lenses 
defined by Johnson and Rosebrugh~\cite{JR13}.
\end{example}

\subsection{Delta lenses as certain semi-monad algebras}

An \emph{algebra} $(f, p)$ 
for the semi-monad $(T, \nu)$ on the codomain functor 
$\cod \colon \C^{\mathbf{2}} \rightarrow \C$ 
(or, equivalently, on the slice category $\C / B$)
consists of a pair of morphisms 
$f \colon A \rightarrow B$ and $p \colon Jf \rightarrow A$ such that 
the following diagrams commute. 
\begin{equation}
\label{equation:semi-monad-algebra}
	\begin{tikzcd}
	Jf
	\arrow[r, "p"]
	\arrow[d, tail, "Tf"']
	&
	A
	\arrow[d, "f"]
	\\
	B
	\arrow[r, equal]
	& 
	B
	\end{tikzcd}
\qquad \qquad
	\begin{tikzcd}[column sep = large]
	JTf
	\arrow[r, "{J\langle p,\, 1_{B} \rangle}"]
	\arrow[d, tail, "\nu_{f}"']
	&
	Jf
	\arrow[d, "p"]
	\\
	Jf
	\arrow[r, "p"']
	& 
	A
	\end{tikzcd}
\end{equation}
Johnson and Rosebrugh (\JR)
introduced an additional condition on the algebras for the semi-monad 
$(T, \nu)$ on $\Cat / B$ which we now 
adapt to our more general setting under the name \emph{\JR-algebra}. 
The intuition is that this additional condition replaces the 
missing ``unit law'' that an algebra for a monad would satisfy. 

\begin{definition}
A \emph{\JR-algebra} is an algebra $(f, p)$ for the 
semi-monad $(T, \nu)$ on the codomain functor 
$\cod \colon \C^{\mathbf{2}} \rightarrow \C$ 
such that the following diagram commutes.
\begin{equation}
\label{equation:JR-algebra}
	\begin{tikzcd}
	MA
	\arrow[d, two heads, "Sf"']
	\arrow[r, "\iota_{A}", "\sim"']
	& 
	A
	\\
	Jf
	\arrow[ru, "p"']
	& 
	\end{tikzcd}
\end{equation}
\end{definition}

A \emph{morphism} 
$\langle h, k \rangle \colon (f, p) \rightarrow (g, q)$ 
of algebras for the semi-monad $(T, \nu)$ 
consists of a pair of morphisms 
$h$ and $k$ such that the following equation in $\C^{\mathbf{2}}$ holds. 
\begin{equation}
\label{equation:T-algebra-morphism}
	\begin{tikzcd}
	Jf
	\arrow[r, "p"]
	\arrow[d, tail, "Tf"']
	&
	A
	\arrow[d, "f"]
	\arrow[r, "h"]
	&
	C
	\arrow[d, "g"]
	\\
	B
	\arrow[r, equal]
	& 
	B
	\arrow[r, "k"']
	&
	D
	\end{tikzcd}
\qquad = \qquad
	\begin{tikzcd}
	Jf
	\arrow[r, "{J\langle h,\, k \rangle}"]
	\arrow[d, tail, "Tf"']
	& 
	Jg
	\arrow[r, "q"]
	\arrow[d, tail, "Tg"]
	&
	C
	\arrow[d, "g"]
	\\
	B
	\arrow[r, "k"']
	&
	D
	\arrow[r, equal]
	&
	D
	\end{tikzcd}
\end{equation}
Let $\Alg(T, \nu)$ denote the category of algebras for the semi-monad
$(T, \nu)$ on the codomain functor 
$\cod \colon \C^{\mathbf{2}} \rightarrow \C$, 
and let $\JRAlg(T, \nu)$ denote the full subcategory of \JR-algebras.

\begin{theorem}
\label{theorem:JR-isomorphism}
If $\C = \Cat$ equipped with the discrete category comonad and the 
comprehensive factorisation system, then
there is an isomorphism of categories $\Lens \cong \JRAlg(T, \nu)$.
\end{theorem}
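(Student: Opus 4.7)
The plan is to exhibit mutually inverse functors between $\Lens$ and $\JRAlg(T,\nu)$ by reading off the lifting operation of a lens from an algebra structure map $p$, and vice versa. Using the explicit description of $Jf$ recorded in Example~\ref{example:JR-algebras}---whose morphisms are pairs $\langle 1_a, v\rangle \colon (a,u_1)\to (a,u_2)$ with $u_2 = v\circ u_1$---the central identification will be $\varphi(a,u) := p(\langle 1_a, u\rangle)$, read as a morphism $a\to p(a,u)$ in $A$.

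Starting from a $\JR$-algebra $(f,p)$, the $\JR$-axiom $p\circ Sf = \iota_A$ ensures $p(a,1_{fa}) = a$ on objects, so that $\varphi(a,u)$ really is a morphism out of $a$. Axiom (L1) is immediate from $f\circ p = Tf$ combined with $Tf(\langle 1_a,u\rangle) = u$, and axiom (L2) is the fact that $p$ preserves the identity morphism on $(a,1_{fa})$. Axiom (L3) is the main content: one first computes, using orthogonality, that the multiplication $\nu_f\colon JTf\to Jf$ acts on objects by $((a,u),v)\mapsto (a,v\circ u)$ and on the generating morphism $\langle 1_{(a,u)},1_b,w\rangle \colon ((a,u),1_b)\to ((a,u),w)$ by $\langle 1_a,w\rangle \colon (a,u)\to (a,w\circ u)$. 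The algebra associativity $p\circ\nu_f = p\circ J\langle p,1_B\rangle$ on this morphism then yields $p(\langle 1_a,w\rangle) = \varphi(p(a,u),w)$. Factoring $\langle 1_a,v\circ u\rangle = \langle 1_a,v\rangle \circ \langle 1_a,u\rangle$ in $Jf$ and applying $p$ delivers (L3).

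Conversely, from a delta lens $(f,\varphi)$ define $p(a,u) := \cod \varphi(a,u)$ on objects and $p(\langle 1_a, v\rangle) := \varphi(p(a,u_1), v)$ on morphisms; (L3) guarantees that the codomain of the latter is indeed $p(a,u_2) = p(p(a,u_1),v)$, and functoriality of $p$ reduces to (L2) and (L3). The algebra axioms and the $\JR$-condition then follow by direct substitution using (L1) and (L2). The two constructions are visibly mutually inverse. For morphisms, the $\JR$-algebra condition $h\circ p = q\circ J\langle h,k\rangle$ evaluated on $\langle 1_a, u\rangle$ reproduces exactly $h\varphi(a,u) = \psi(ha,ku)$, and conversely; compatibility with composition in each category is then automatic.

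The main obstacle is the bookkeeping around $\nu_f$: correctly identifying the orthogonal lift in the comprehensive factorisation system, and recognising precisely which morphism of $JTf$ encodes the composition of two compatible lifts, so that the algebra associativity translates on the nose into axiom (L3). Once this computation is in place, both directions and the morphism compatibility are straightforward verifications.
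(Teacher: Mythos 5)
Your proposal is correct and follows essentially the same route as the paper's own proof in Appendix~A: the dictionary $\varphi(a,u) = p\langle 1_a, u\rangle$ and $p\langle 1_a,v\rangle = \varphi(p(a,u_1),v)$, with (L1)/(L2) coming from the algebra equation $f\circ p = Tf$ and the \JR-condition, and (L3) extracted from the associativity square via the explicit descriptions of $\nu_f$ and $J\langle p,1_B\rangle$ on $JTf$. The treatment of morphisms of algebras versus morphisms of lenses also matches the paper's argument.
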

\begin{proof}
This result is due to Johnson and Rosebrugh \cite{JR13}. 
See Appendix~\ref{section:appendix} for a proof in our notation. 
\end{proof}

\section{Delta lenses as algebras for a monad}
\label{section:algebras-for-a-monad}

Throughout this section, 
let $(\E, \M)$ be an orthogonal factorisation system on a category $\C$
with (chosen) pushouts, 
and let $(M, \iota)$ be an idempotent comonad on 
$\C$ such that $M \colon \C \rightarrow \C$ preserves pushouts. 

\subsection{Constructing a monad for delta lenses}

We now extend the semi-monad $(T, \nu)$ to a monad 
$(R, \eta, \mu)$ on $\C^{\mathbf{2}}$, for a category $\C$
as described above.
Our approach is to utilise the universal properties of pushouts 
and orthogonal factorisation systems, as well as properties of 
the class of weak equivalences for the idempotent comonad, 
to construct the necessary data 
for the monad from that of the semi-monad $(T, \nu)$. 

We begin by constructing an endofunctor 
$R \colon \C^{\mathbf{2}} \rightarrow \C^{\mathbf{2}}$. 
Given a morphism $f \colon A \rightarrow B$ in $\C$, 
first construct the pushout of $\iota_{A}$ along $Sf$ 
from \eqref{equation:endofunctor-T}, and then 
use the universal property of the pushout to define 
$Rf \colon Ef \rightarrow B$ as depicted on the left below;
this defines the action of $R$ on objects in~$\C^{\mathbf{2}}$. 
\begin{equation}
\label{equation:endofunctor-R-objects}
	\begin{tikzcd}
	MA
	\arrow[r, "\iota_{A}", "\sim"']
	\arrow[d, two heads, "Sf"']
	\arrow[rd, phantom, "\ulcorner", very near end]
	& 
	A
	\arrow[dd, bend left=50, "f"]
	\arrow[d, two heads, "Lf"'{yshift = 3pt}]
	\\
	Jf
	\arrow[d, tail, "Tf"']
	\arrow[r, "\alpha_{f}", "\sim"']
	& 
	Ef
	\arrow[d, dashed, "Rf"']
	\\
	B
	\arrow[r, equal]
	&
	B
	\end{tikzcd}
\qquad \qquad \qquad 
	\begin{tikzcd}
	MEf
	\arrow[r, "\iota_{Ef}", "\sim"']
	\arrow[d, two heads, "SRf"']
	\arrow[rd, phantom, "\ulcorner", very near end]
	& Ef
	\arrow[d, two heads, "LRf"'{yshift=3pt}]
	\arrow[dd, bend left=50, "Rf"]
	\\
	JRf
	\arrow[d, tail, "TRf"']
	\arrow[r, "\alpha_{Rf}", "\sim"']
	&
	ERf
	\arrow[d, dashed, "R^{2}f"']
	\\
	B
	\arrow[r, equal]
	&
	B
	\end{tikzcd}
\end{equation}
Given a morphism $\langle h, k \rangle \colon f \rightarrow g$ in
$\C^{\mathbf{2}}$, there exists a unique
morphism $E \langle h, k \rangle \colon Jf \rightarrow Jg$ in $\C$, 
as depicted below, by the universal property of the pushout, where 
$J\langle h, k \rangle$ is defined in \eqref{equation:endofunctor-T}. 
It is not difficult to show through diagram-chasing  that 
$Rg \circ E\langle h, k \rangle = k \circ Rf$,
thus defining the action of $R$ on morphisms of $\C^{\mathbf{2}}$.
\begin{equation}
\label{equation:endofunctor-R-morphisms}
\begin{tikzcd}
	MA
	\arrow[r, "\iota_{A}", "\sim"']
	\arrow[d, two heads, "Sf"']
	\arrow[rd, phantom, "\ulcorner", very near end]
	& 
	A
	\arrow[rd, "h"]
	\arrow[d, two heads, "Lf"'{yshift = 3pt}]
	& 
	\\
	Jf
	\arrow[rd, "{J\langle h,\, k \rangle}"']
	\arrow[r, "\alpha_{f}", "\sim"']
	& 
	Ef
	\arrow[rd, dashed, "{E\langle h,\, k \rangle}"{xshift = -6pt, yshift = 3pt}]
	&
	C
	\arrow[d, two heads, "Lg"]
	\\
	&
	Jg
	\arrow[r, "\alpha_{g}", "\sim"']
	& 
	Eg
	\end{tikzcd}
\qquad = \qquad
	\begin{tikzcd}
	MA
	\arrow[r, "\iota_{A}", "\sim"']
	\arrow[d, two heads, "Sf"']
	\arrow[rd, "Mh"{xshift=3pt, yshift = -2pt}]
	& 
	A
	\arrow[rd, "h"]
	& 
	\\
	Jf
	\arrow[rd, "{J\langle h,\, k \rangle}"']
	& 
	MC
	\arrow[r, "\iota_{C}", "\sim"']
	\arrow[d, two heads, "Sg"']
	\arrow[rd, phantom, "\ulcorner", very near end]
	&
	C
	\arrow[d, two heads, "Lg"]
	\\
	&
	Jg
	\arrow[r, "\alpha_{g}", "\sim"']
	& 
	Eg
	\end{tikzcd}
\end{equation}

\begin{lemma}
\label{lemma:functorial-factorisation}
The triple $(L, E, R)$ constructed in 
\eqref{equation:endofunctor-R-objects} and 
\eqref{equation:endofunctor-R-morphisms} is 
functorial factorisation on $\C$. 
\end{lemma}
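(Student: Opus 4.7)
The plan is to extract everything from the universal property of the pushout defining $Ef$ in \eqref{equation:endofunctor-R-objects}, together with the functoriality of $J \colon \C^{\mathbf{2}} \to \C$ established (implicitly) in Proposition~\ref{proposition:semi-monad}. Since $Ef$ is a pushout of $\iota_{A}$ along $Sf$, any morphism out of $Ef$ is determined by its composites with $Lf$ and $\alpha_{f}$; this uniqueness is the workhorse throughout.

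Three things require verification: (a) the factorisation identity $Rf \circ Lf = f$ on objects; (b) the lower-square commutativity $Rg \circ E\langle h, k\rangle = k \circ Rf$ on morphisms (the upper square $E\langle h,k\rangle \circ Lf = Lg \circ h$ being built directly into the pushout definition of $E\langle h,k\rangle$ in \eqref{equation:endofunctor-R-morphisms}); and (c) functoriality of $E$, from which functoriality of the section $(L, E, R)$ follows.

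For (a), the cospan $(f, Tf)$ satisfies $f \circ \iota_{A} = Tf \circ Sf$ by square~(i) of \eqref{equation:endofunctor-T}, so the pushout yields $Rf$ with $Rf \circ Lf = f$ and $Rf \circ \alpha_{f} = Tf$ simultaneously. For (b), I would check agreement of the two candidate morphisms $Ef \to B$ after precomposition with $Lf$ and $\alpha_{f}$: on $Lf$ both sides collapse to $g \circ h = k \circ f$, and on $\alpha_{f}$ both reduce to $k \circ Tf$ using the defining identity $E\langle h,k\rangle \circ \alpha_{f} = \alpha_{g} \circ J\langle h,k\rangle$ together with the commuting square~(ii) of \eqref{equation:endofunctor-T} (i.e.\ $Tg \circ J\langle h,k\rangle = k \circ Tf$).

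For (c), the same uniqueness principle forces $E(1_{f}) = 1_{Ef}$ and $E(\langle h',k'\rangle \circ \langle h,k\rangle) = E\langle h',k'\rangle \circ E\langle h,k\rangle$, since in each case both sides satisfy the characterising equations of the pushout after precomposition with $Lf$ and $\alpha_{f}$, now using functoriality of $J$. Assembling these checks produces the required section $(L, E, R) \colon \C^{\mathbf{2}} \to \C^{\mathbf{3}}$ of the composition functor. I do not anticipate any genuine obstacle: once the pushout's universal property and the functoriality of $J$ are in hand, the argument reduces to routine diagrammatic bookkeeping.
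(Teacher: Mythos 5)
Your proposal is correct and follows essentially the same route the paper intends: the paper states the lemma after the pushout construction and dismisses the verification of $Rg \circ E\langle h,k\rangle = k \circ Rf$ (and, implicitly, functoriality of $E$) as routine diagram-chasing via the universal property of the pushout, which is exactly the uniqueness-after-precomposition-with-$Lf$-and-$\alpha_{f}$ argument you spell out. Your identification of the three things to check, and the reduction of each to the pushout's uniqueness clause plus functoriality of $J$, is a faithful expansion of the paper's omitted proof.
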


By Remark~\ref{remark:functorial-factorisation}, this functorial
factorisation induces a pointed endofunctor $(R, \eta)$ on 
$\C^{\mathbf{2}}$ where the 
component of $\eta$ at $f$ is given by the morphism 
$Lf \colon A \rightarrow Ef$ as depicted in 
\eqref{equation:epsilon-eta}.
To extend this pointed endofunctor to a monad, all that remains is to
define a suitable multiplication $\mu \colon R^{2} \Rightarrow R$. 

To construct this multiplication, we first observe 
that the morphism $\alpha_{f} \colon Jf \rightarrow Ef$ constructed
in \eqref{equation:endofunctor-R-objects} is a weak equivalence, 
and therefore 
the morphism $M\alpha_{f} \colon MJf \rightarrow MEf$ 
is invertible. 
It follows from the orthogonality property that the morphism 
$J\langle \alpha_{f}, 1_{B} \rangle \colon JTf \rightarrow JRf$ 
is invertible as depicted below. 
\begin{equation}
\label{equation:useful-isomorphism}
	\begin{tikzcd}
	MJf
	\arrow[r, "M\alpha_{f}", "\cong"']
	\arrow[d, "\iota_{Jf}"', "\sim"{rotate=90, anchor=north}]
	&
	MEf
	\arrow[d, "\iota_{Ef}", "\sim"{rotate=90, anchor=south}]
	\\
	Jf
	\arrow[r, "\alpha_{f}", "\sim"']
	\arrow[d, tail, "Tf"']
	&
	Ef
	\arrow[d, "Rf"]
	\\
	B
	\arrow[r, equal]
	&
	B
	\end{tikzcd}
\qquad = \qquad
	\begin{tikzcd}[column sep = large]
	MJf
	\arrow[r, "M\alpha_{f}", "\cong"']
	\arrow[d, two heads, "STf"']
	&
	MEf
	\arrow[d, two heads, "SRf"]
	\\
	JTf
	\arrow[r, dashed, "{J\langle \alpha_{f},\, 1_{B} \rangle}", "\cong"']
	\arrow[d, tail, "T^{2}f"']
	&
	JRf
	\arrow[d, tail, "TRf"]
	\\
	B
	\arrow[r, equal]
	&
	B
	\end{tikzcd}
\end{equation}
Using the universal property of the pushout, the morphism 
$\nu_{f}$ defined in \eqref{equation:semi-monad-naturality},
and the morphism $J\langle \alpha_{f}, 1_{B} \rangle^{-1}$ defined 
in \eqref{equation:useful-isomorphism},
we obtain the 
component $\mu_{f}$ of the multiplication 
$\mu \colon R^{2} \Rightarrow R$ at $f$ as depicted below. 
\begin{equation}
\label{equation:multiplication-mu}
	\begin{tikzcd}
	MEf
	\arrow[r, "\iota_{Ef}", "\sim"']
	\arrow[d, two heads, "SRf"']
	\arrow[rd, phantom, "\ulcorner", very near end]
	& 
	Ef
	\arrow[rdd, equal, bend left]
	\arrow[d, two heads, "LRf"'{yshift = 3pt}]
	& 
	\\
	JRf
	\arrow[d, "{J\langle \alpha_{f},\, 1_{B} \rangle^{-1}}"', "\cong"]
	\arrow[r, "\alpha_{Rf}", "\sim"']
	& 
	ERf
	\arrow[rd, two heads, dashed, "\mu_{f}"]
	&
	\\
	JTf
	\arrow[r, tail, "\nu_{f}"]
	&
	Jf
	\arrow[r, "\alpha_{f}", "\sim"']
	& 
	Ef
	\end{tikzcd}
\qquad = \qquad
	\begin{tikzcd}
	MEf
	\arrow[r, "\iota_{Ef}", "\sim"']
	\arrow[d, two heads, "SRf"']
	\arrow[rd, "{(M\alpha_{f})^{-1}}"{yshift=-3pt, xshift=2pt}, "\cong"']
	& 
	Ef
	\arrow[rdd, equal, bend left]
	& 
	\\
	JRf
	\arrow[d, "{J\langle \alpha_{f},\, 1_{B} \rangle^{-1}}"', "\cong"]
	& 
	MJf
	\arrow[ld, two heads, "STf"'{yshift = 3pt, xshift = 6pt}]
	\arrow[d, "\iota_{Jf}", "\sim"'{rotate=90, anchor=center, yshift=3pt}]
	&
	\\
	JTf
	\arrow[r, tail, "\nu_{f}"]
	&
	Jf
	\arrow[r, "\alpha_{f}", "\sim"']
	& 
	Ef
	\end{tikzcd}
\end{equation}
A tedious, yet routine, exercise in diagram-chasing using the 
morphisms defined in \eqref{equation:endofunctor-R-morphisms} and \eqref{equation:multiplication-mu}, and applying the universal property of the pushout shows that $Rf \circ \mu_{f} = R^{2}f$
and that $\mu$ is natural as depicted below.
\begin{equation*}
	\begin{tikzcd}
	ERf
	\arrow[d, "R^{2}f"']
	\arrow[r, two heads, "\mu_{f}"]
	& 
	Ef
	\arrow[d, "Rf"]
	\arrow[r, "{E\langle h,\, k \rangle}"]
	&
	Eg
	\arrow[d, "Rg"]
	\\
	B
	\arrow[r, equal]
	& 
	B
	\arrow[r, "k"']
	&
	D
	\end{tikzcd}
\qquad = \qquad
	\begin{tikzcd}
	ERf
	\arrow[d, "R^{2}f"']
	\arrow[r, "{E\langle E\langle h,\, k \rangle,\, k \rangle}"]
	&[+1.5em]
	ERg
	\arrow[d, "R^{2}g"]
	\arrow[r, two heads, "\mu_{g}"]
	&
	Eg
	\arrow[d, "Rg"]
	\\
	B
	\arrow[r, "k"']
	& 
	D
	\arrow[r, equal]
	&
	D
	\end{tikzcd}
\end{equation*}
Showing that the diagrams below commute, and thus establishing that the 
multiplication $\mu$ 
is unital and associative, is also a straightforward
application of definitions and the universal property of the pushout.  
\begin{equation*}
	\begin{tikzcd}[column sep = large]
	Ef
	\arrow[r, two heads, "LRf"]
	\arrow[rd, equal]
	& 
	ERf
	\arrow[d, two heads, "\mu_{f}"]
	&
	Ef
	\arrow[l, two heads, "{E\langle Lf, 1_{B}\rangle}"']
	\arrow[ld, equal]
	\\
	& 
	Ef
	&
	\end{tikzcd}
\qquad \qquad
	\begin{tikzcd}[column sep = large]
	ER^{2}f
	\arrow[r, two heads, "{E\langle \mu_{f}, 1_{B}\rangle}"]
	\arrow[d, two heads, "\mu_{Rf}"']
	& ERf
	\arrow[d, two heads, "\mu_{f}"]
	\\
	ERf
	\arrow[r, two heads, "\mu_{f}"']
	&
	Ef
	\end{tikzcd}
\end{equation*}

\begin{theorem}
\label{theorem:monad-for-lenses}
The triple $(R, \eta, \mu)$ is a monad on $\C^{\mathbf{2}}$. 
\end{theorem}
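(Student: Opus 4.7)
The plan is to assemble the monad structure from the pieces already at hand and verify the remaining axioms (naturality of $\mu$, the two unit laws, and associativity) by repeated appeal to the universal property of the pushouts defining $ERf$ and $ER^{2}f$. Functoriality of $R$ together with naturality of $\eta$ is provided by Lemma~\ref{lemma:functorial-factorisation} and Remark~\ref{remark:functorial-factorisation}, so only the multiplicative structure needs attention.

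First I would verify that $\mu_{f} \colon ERf \to Ef$ in \eqref{equation:multiplication-mu} is well-defined as a map out of the pushout. The pushout defining $ERf$ has legs $LRf \colon Ef \to ERf$ and $\alpha_{Rf} \colon JRf \to ERf$, so a morphism out of $ERf$ is specified by a pair of compatible maps from $Ef$ and $JRf$. The candidates are $1_{Ef}$ and $\alpha_{f} \circ \nu_{f} \circ J\langle \alpha_{f}, 1_{B}\rangle^{-1}$; their compatibility along $\iota_{Ef}$ and $SRf$ reduces, via \eqref{equation:useful-isomorphism}, to naturality of $\iota$ at $\alpha_{f}$. An analogous check using the pushouts in \eqref{equation:endofunctor-R-objects} and \eqref{equation:endofunctor-R-morphisms} shows $Rf \circ \mu_{f} = R^{2}f$, so $\mu_{f}$ is indeed a morphism $R^{2}f \to Rf$ in $\C^{\mathbf{2}}$.

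Naturality of $\mu$ and the two unit laws all amount to equalities of parallel maps out of a pushout, and each is checked by comparing restrictions along the two pushout legs; on each leg the verification is a short calculation using the definitions of $L$, $R$, $\alpha$, and $\nu_{f}$, together with naturality of $\nu$ from \eqref{equation:semi-monad-naturality}. The left unit law $\mu_{f} \circ LRf = 1_{Ef}$ is built directly into the defining diagram of $\mu_{f}$. The right unit law $\mu_{f} \circ E\langle Lf, 1_{B}\rangle = 1_{Ef}$ follows because on the $JRf$-leg it becomes, after transport through $J\langle \alpha_{f}, 1_{B}\rangle^{-1}$, the identity $\nu_{f} \circ J\langle Lf, 1_{B}\rangle = 1_{Jf}$, itself coming from the orthogonality characterisation of $\nu_{f}$ applied to the commuting outer square obtained by pasting the $L$-unit into~\eqref{equation:semi-monad-naturality}.

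The main obstacle is associativity, namely $\mu_{f} \circ \mu_{Rf} = \mu_{f} \circ E\langle \mu_{f}, 1_{B}\rangle$ as parallel maps $ER^{2}f \to Ef$. Appealing to the pushout defining $ER^{2}f$, with legs $LR^{2}f$ and $\alpha_{R^{2}f}$, it suffices to check equality after precomposition with each leg. The leg through $ERf$ reduces, by applying the definition of $\mu$ twice, to the trivial equation $\mu_{f} = \mu_{f}$. The leg through $JR^{2}f$ is where the semi-monad structure is essential: unfolding both sides through \eqref{equation:multiplication-mu} and \eqref{equation:endofunctor-R-morphisms}, and transporting everything across the isomorphisms $J\langle \alpha_{-}, 1_{B}\rangle$ of \eqref{equation:useful-isomorphism} into the $J$-picture, the required equation becomes precisely $\nu_{f} \circ \nu_{Tf} = \nu_{f} \circ J\langle \nu_{f}, 1_{B}\rangle$, which is the associativity of $\nu$ established in \eqref{equation:semi-monad-associativity}. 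Thus associativity of $\mu$ is inherited from that of the underlying semi-monad, completing the verification that $(R, \eta, \mu)$ is a monad.
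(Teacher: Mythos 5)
Your proposal is correct and follows essentially the same route as the paper, which defines $\mu_{f}$ out of the pushout via $\nu_{f}$ and $J\langle \alpha_{f}, 1_{B}\rangle^{-1}$ and then leaves naturality, the unit laws, and associativity as routine checks against the pushout legs; your reduction of associativity to \eqref{equation:semi-monad-associativity} via naturality of $\nu$ at $\langle \alpha_{f}, 1_{B}\rangle$ is exactly the intended argument. (One cosmetic point: $\nu_{f} \circ J\langle Lf, 1_{B}\rangle$ does not typecheck as written, since $J\langle Lf, 1_{B}\rangle$ lands in $JRf$ rather than $JTf$; you clearly mean the composite with $J\langle \alpha_{f}, 1_{B}\rangle^{-1}$ interposed, as your phrase ``after transport'' indicates.)
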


\begin{corollary}
The monad $(R, \eta, \mu)$ on $\C^{\mathbf{2}}$ restricts to a 
monad in the $2$-category $\CAT / \C$ 
on the codomain functor $\cod \colon \C^{\mathbf{2}} \rightarrow \C$. 
In particular, $(R, \eta, \mu)$ induces a monad on each slice 
category $\C / B$. 
\end{corollary}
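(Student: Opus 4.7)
The plan is to verify that every piece of data defining the monad $(R, \eta, \mu)$ on $\C^{\mathbf{2}}$ is \emph{vertical} over $\C$ with respect to the codomain functor $\cod$. Concretely, I would check three equalities: (a) $\cod \circ R = \cod$ as functors $\C^{\mathbf{2}} \to \C$; (b) $\cod \cdot \eta = 1_{\cod}$; and (c) $\cod \cdot \mu = 1_{\cod}$. These assert exactly that $R$ is a 1-cell of $\CAT / \C$ from $\cod$ to itself, and that $\eta$ and $\mu$ are 2-cells of $\CAT / \C$. Since the monad axioms for $(R, \eta, \mu)$ have already been established in $\C^{\mathbf{2}}$ by Theorem~\ref{theorem:monad-for-lenses}, and the 2-category structure of $\CAT / \C$ is inherited from that of $\CAT$, the same axioms witness a monad in $\CAT / \C$ with no further work.

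For (a), the defining pushout in \eqref{equation:endofunctor-R-objects} produces $Rf \colon Ef \to B$, so on objects $\cod(Rf) = B = \cod(f)$. On morphisms, the construction \eqref{equation:endofunctor-R-morphisms} yields the pair $R\langle h, k \rangle = \langle E\langle h, k \rangle,\, k \rangle$, whose codomain component is literally $k = \cod\langle h, k \rangle$; the equation $Rg \circ E\langle h, k \rangle = k \circ Rf$ noted in the text after \eqref{equation:endofunctor-R-morphisms} is precisely the statement that this is a well-defined morphism in $\C^{\mathbf{2}}$. Hence $\cod \circ R = \cod$ strictly.

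For (b) and (c), I would inspect the components directly. The unit $\eta_f = \langle Lf,\, 1_B \rangle$, as described in Remark~\ref{remark:functorial-factorisation} and displayed in the right square of \eqref{equation:epsilon-eta}, has codomain component $1_B$, so $\cod \cdot \eta = 1_{\cod}$. For the multiplication, the diagram \eqref{equation:multiplication-mu} exhibits $\mu_f$ as a morphism $R^{2}f \to Rf$ in $\C^{\mathbf{2}}$ whose bottom edge is the identity on $B$: both $R^{2}f$ and $Rf$ have codomain $B$, and the universal property of the pushout constructing $\mu_f$ is applied inside the slice over $B$, forcing the codomain component to be $1_B$. Thus $\cod \cdot \mu = 1_{\cod}$.

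Combining (a)--(c) gives a monad in $\CAT / \C$ on the 0-cell $\cod$. For the second sentence of the corollary, the fiber of $\cod$ over any object $B \in \C$ is the slice category $\C / B$, and verticality of $R$, $\eta$, $\mu$ means each restricts to $\C / B$: if $f \colon A \to B$ then $Rf \in \C / B$, and a morphism $\langle h, 1_B \rangle$ in $\C / B$ is sent to $\langle E\langle h, 1_B \rangle,\, 1_B \rangle$, again in $\C / B$; the unit and multiplication restrict identically. The only real work is the bookkeeping extraction of (a)--(c) from the diagrams of the previous subsection, which were designed so that every bottom edge is either an identity or a given $k$; this is the main (and essentially only) obstacle, and it is entirely mechanical.
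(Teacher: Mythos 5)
Your verification is correct and is exactly the argument the paper leaves implicit: the corollary is stated without proof because every component constructed in \eqref{equation:endofunctor-R-objects}--\eqref{equation:multiplication-mu} has codomain component an identity (or the given $k$), which is precisely your conditions (a)--(c). The equations $Rg \circ E\langle h, k\rangle = k \circ Rf$ and $Rf \circ \mu_f = R^2 f$ that you cite are the ones the paper records for this purpose, so nothing further is needed.
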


\begin{remark}
The morphisms $\alpha_{f}$ defined as pushout injections in 
\eqref{equation:endofunctor-R-objects} assemble into a natural 
transformation $\alpha \colon T \Rightarrow R$ which underlies a
morphism of semi-monads $(T, \nu) \rightarrow (R, \mu)$.
We conjecture that $(R, \eta, \mu)$ is actually the \emph{free monad} 
on the semi-monad $(T, \nu)$, in a suitable sense, however leave this for future work.
\end{remark}

\subsection{Delta lenses as monad algebras}

We now construct the algebras for the monad $(R, \eta, \mu)$ on
$\C^{\mathbf{2}}$ and show they are the same as \JR-algebras for 
the semi-monad $(T, \nu)$. 
When $\C = \Cat$ equipped with the comprehensive factorisation system and the discrete category comonad, this result establishes that 
delta lenses are algebras for the monad $(R, \eta, \mu)$. 

An \emph{algebra} 
$(f, \hat{p})$ for the monad 
$(R, \eta, \mu)$ on $\C^{\mathbf{2}}$ consists of a pair of morphisms 
$f \colon A \rightarrow B$ and 
$\hat{p} \colon Ef \rightarrow A$ 
such that the following diagrams commute: 
\begin{equation}
\label{equation:algebra-for-R}
	\begin{tikzcd}
	A
	\arrow[r, equal]
	\arrow[d, two heads, "Lf"']
	& 
	A
	\arrow[d, "f"]
	\\
	Ef
	\arrow[r, "Rf"']
	\arrow[ru, two heads, "\hat{p}"]
	&
	B
	\end{tikzcd}
\qquad \qquad
	\begin{tikzcd}[column sep = large]
	ERf
	\arrow[r, two heads, "{E\langle \hat{p},\, 1_{B}\rangle}"]
	\arrow[d, two heads, "\mu_{f}"']
	& 
	Ef
	\arrow[d, two heads, "\hat{p}"]
	\\
	Ef
	\arrow[r, two heads, "\hat{p}"']
	&
	A
	\end{tikzcd}
\end{equation}
A \emph{morphism} 
$\langle h, k \rangle \colon (f, \hat{p}) \rightarrow (g, \hat{q})$ 
of algebras for the monad $(R, \eta, \mu)$ 
consists of a pair of morphisms 
$h$ and $k$ such that the following equation in $\C^{\mathbf{2}}$ holds. 
\begin{equation}
\label{equation:R-algebra-morphism}
	\begin{tikzcd}
	Ef
	\arrow[r, two heads, "\hat{p}"]
	\arrow[d, "Rf"']
	&
	A
	\arrow[d, "f"]
	\arrow[r, "h"]
	&
	C
	\arrow[d, "g"]
	\\
	B
	\arrow[r, equal]
	& 
	B
	\arrow[r, "k"']
	&
	D
	\end{tikzcd}
\qquad = \qquad
	\begin{tikzcd}
	Ef
	\arrow[r, "{E\langle h,\, k \rangle}"]
	\arrow[d, "Rf"']
	& 
	Eg
	\arrow[r, two heads, "\hat{q}"]
	\arrow[d, "Rg"]
	&
	C
	\arrow[d, "g"]
	\\
	B
	\arrow[r, "k"']
	&
	D
	\arrow[r, equal]
	&
	D
	\end{tikzcd}
\end{equation}
Let $\Alg(R, \eta, \mu)$ denote the category of algebras for the monad 
$(R, \eta, \mu)$. 

\begin{proposition}
\label{proposition:JR-algebras-as-algebras}
There is an isomorphism of categories 
$\JRAlg(T, \nu) \cong \Alg(R, \eta, \mu)$. 
\end{proposition}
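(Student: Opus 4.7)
The plan is to exploit that $Ef$ is, by construction in \eqref{equation:endofunctor-R-objects}, the pushout of $\iota_{A}$ along $Sf$ with injections $\alpha_{f} \colon Jf \rightarrow Ef$ and $Lf \colon A \rightarrow Ef$. By the universal property, a morphism $\hat{p} \colon Ef \rightarrow A$ corresponds bijectively to a pair $(p, q)$ with $p \colon Jf \rightarrow A$ and $q \colon A \rightarrow A$ such that $p \circ Sf = q \circ \iota_{A}$, via $p = \hat{p} \circ \alpha_{f}$ and $q = \hat{p} \circ Lf$.

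First I would establish the object-level bijection between $\JR$-algebras and $R$-algebras on a fixed $f$. The unit axiom in \eqref{equation:algebra-for-R} forces $q = \hat{p} \circ Lf = 1_{A}$, in which case the pushout compatibility $p \circ Sf = \iota_{A}$ becomes exactly the $\JR$-condition \eqref{equation:JR-algebra}. Conversely, any $\JR$-algebra $(f, p)$ yields a compatible pair $(p, 1_{A})$ and hence a unique $\hat{p} \colon Ef \rightarrow A$ satisfying the unit law. The over-$B$ condition $f \circ \hat{p} = Rf$ then follows from the universal property, since both sides agree with $Tf = f \circ p$ on $\alpha_{f}$ (using \eqref{equation:semi-monad-algebra}) and with $f$ on $Lf$.

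The main obstacle is verifying that the multiplicative axioms correspond. Here I would use the explicit construction of $\mu_{f}$ in \eqref{equation:multiplication-mu} as the map from the pushout $ERf$ whose $\alpha_{Rf}$-leg is $\alpha_{f} \circ \nu_{f} \circ J\langle \alpha_{f}, 1_{B} \rangle^{-1}$ and whose $LRf$-leg is $1_{Ef}$. Pre-composing the $R$-algebra associativity in \eqref{equation:algebra-for-R} with $\alpha_{Rf}$, and expanding $E\langle \hat{p}, 1_{B} \rangle \circ \alpha_{Rf} = \alpha_{f} \circ J\langle \hat{p} \circ \alpha_{f}, 1_{B} \rangle = \alpha_{f} \circ J\langle p, 1_{B} \rangle$ via \eqref{equation:endofunctor-R-morphisms}, reduces the $R$-algebra condition to the semi-monad condition $p \circ \nu_{f} = p \circ J\langle p, 1_{B} \rangle$ after cancelling the isomorphism $J\langle \alpha_{f}, 1_{B} \rangle$ from \eqref{equation:useful-isomorphism}. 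The $LRf$-leg collapses trivially using the unit law. Invoking the universal property of $ERf$ in the reverse direction shows the two axioms are indeed equivalent.

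Finally, the correspondence of algebra morphisms reduces to the same pushout principle: for a commutative square $k \circ f = g \circ h$, the essential $R$-algebra morphism condition $h \circ \hat{p} = \hat{q} \circ E\langle h, k \rangle$ pre-composed with $\alpha_{f}$ gives the $T$-algebra morphism condition $h \circ p = q \circ J\langle h, k \rangle$ of \eqref{equation:T-algebra-morphism} (using $E\langle h, k \rangle \circ \alpha_{f} = \alpha_{g} \circ J\langle h, k \rangle$), while pre-composition with $Lf$ is automatic from the unit laws and $E\langle h, k \rangle \circ Lf = Lg \circ h$. The universal property of $Ef$ then yields the equivalence, and functoriality of the bijection in both directions is immediate.
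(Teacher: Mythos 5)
Your proposal is correct and follows essentially the same route as the paper: both use the universal property of the pushout $Ef$ to pass between $p = \hat{p} \circ \alpha_{f}$ and $\hat{p} = [p, 1_{A}]$, with the unit law forcing the $Lf$-leg to be $1_{A}$ and hence identifying the pushout-compatibility condition with the \JR-condition \eqref{equation:JR-algebra}. Your treatment of the multiplicative axioms (precomposing with the two legs of $ERf$ and cancelling the isomorphism $J\langle \alpha_{f}, 1_{B}\rangle$) correctly fills in a verification the paper leaves as ``straightforward.''
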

\begin{proof}
Let $(f \colon A \rightarrow B, p \colon Jf \rightarrow A)$ be a
$\JR$-algebra for the semi-monad $(T, \nu)$. 
Using the diagram \eqref{equation:JR-algebra} and the universal property
of the pushout, we obtain a morphism 
$[p, 1_{A}] \colon Ef \rightarrow A$ as depicted below. 
\begin{equation*}
	\begin{tikzcd}
	MA 
	\arrow[r, "\iota_{A}", "\sim"']
	\arrow[d, two heads, "Sf"']
	\arrow[rd, phantom, "\ulcorner", very near end]
	& 
	A
	\arrow[d, two heads, "Lf"]
	\arrow[rdd, bend left, "1_{A}"]
	&
	\\
	Jf
	\arrow[r, "\alpha_{f}", "\sim"']
	\arrow[rrd, bend right, "p"'{pos=0.3}]
	& 
	Ef
	\arrow[rd, dashed, "{[p, 1_{A}]}"{pos = 0.2}]
	&
	\\
	& 
	& 
	A
	\end{tikzcd}
\end{equation*}
Using the universal property of the pushout and the axioms for 
the \JR-algebra $(f, p)$, it is straightforward to prove that the pair
$(f, [p, 1_{A}])$ is an algebra for the monad $(R, \eta, \mu)$. 

Now consider an algebra 
$(f \colon A \rightarrow B, \hat{p} \colon Ef \rightarrow A)$ 
for the monad $(R, \eta, \mu)$. 
Pre-composing the structure map of the algebra with $\alpha_{f}$ 
we obtain a morphism $\hat{p} \circ \alpha_{f} \colon Jf \rightarrow B$. 
Using the axioms for the algebra $(f, \hat{p})$ and appropriate 
pasting of commutative diagrams, one may easily show that the pair
$(f, \hat{p} \circ \alpha_{f})$ is a \JR-algebra for the semi-monad 
$(T, \nu)$. 

The \JR-algebras for $(T, \nu)$ and the algebra for $(R, \eta, \mu)$ 
are in bijective correspondence with each other, since 
$[\hat{p} \circ \alpha_{f}, 1_{A}] = \hat{p}$ by the universal 
property of the pushout, and $[p, 1_{A}] \circ \alpha_{f} = p$ by 
construction. 
One may extend this correspondence to the morphisms 
\eqref{equation:T-algebra-morphism} and
\eqref{equation:R-algebra-morphism} of 
the respective categories and show it is functorial, thus
establishing the stated isomorphism of categories. 
\end{proof}

The following theorem establishes a key result of the paper: 
delta lenses are algebras for a monad. 

\begin{theorem}
\label{theorem:delta-lenses-are-algebras}
There is an isomorphism of categories $\Lens \cong \Alg(R, \eta, \mu)$. 
\end{theorem}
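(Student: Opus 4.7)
The plan is to obtain the desired isomorphism by composing the two isomorphisms already available: $\Lens \cong \JRAlg(T, \nu)$ from Theorem~\ref{theorem:JR-isomorphism}, and $\JRAlg(T, \nu) \cong \Alg(R, \eta, \mu)$ from Proposition~\ref{proposition:JR-algebras-as-algebras}. Before invoking these, I would first check that the running hypotheses of Section~\ref{section:algebras-for-a-monad} are met in the special case $\C = \Cat$ equipped with the comprehensive factorisation system (Example~\ref{example:comprehensive-factorisation-system}) and the discrete category comonad $(-)_{0}$ (Example~\ref{example:boo}): the category $\Cat$ has all pushouts, and $(-)_{0}$ preserves colimits since it is a left adjoint, in particular it preserves pushouts. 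Consequently the monad $(R, \eta, \mu)$ of Theorem~\ref{theorem:monad-for-lenses} is defined on $\Cat^{\mathbf{2}}$.

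Given this, Theorem~\ref{theorem:JR-isomorphism} supplies the first isomorphism, identifying a delta lens $(f, \varphi)$ on $f \colon A \to B$ with the \JR-algebra $(f, p)$ whose structure map $p \colon Jf \to A$ sends a pair $(a, u \colon fa \to b)$ to $p(a, u) = \cod(\varphi(a, u))$. Proposition~\ref{proposition:JR-algebras-as-algebras}, proved above, then supplies the second isomorphism: under its bijection, the \JR-algebra $(f, p)$ corresponds to the $R$-algebra $(f, [p, 1_{A}])$, where $[p, 1_{A}] \colon Ef \to A$ is the morphism induced by the universal property of the pushout that defines $Ef$. Functoriality in both directions is recorded in the respective statements, so the composite is an isomorphism of categories.

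The main obstacle is essentially the verification that the two isomorphisms compose cleanly, i.e.\ that there are no compatibility issues between the specialisation to $\Cat$ in Theorem~\ref{theorem:JR-isomorphism} and the general-$\C$ construction used in Proposition~\ref{proposition:JR-algebras-as-algebras}. Since the latter is stated and proved in the generality of any $\C$ satisfying the standing assumptions of Section~\ref{section:algebras-for-a-monad}, and we have just checked that $\Cat$ with this choice of $(\E, \M)$ and $(M, \iota)$ is such a~$\C$, the composition goes through without modification. I would therefore present the proof as a single short paragraph, simply composing the two isomorphisms and noting the explicit correspondence $(f, \varphi) \longleftrightarrow (f, [p, 1_{A}])$ that arises, which is consistent with the explicit description of the free delta lens on a functor in Example~\ref{example:free-delta-lens}.
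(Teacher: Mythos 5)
Your proposal is correct and follows exactly the paper's own argument: compose the isomorphism $\Lens \cong \JRAlg(T,\nu)$ of Theorem~\ref{theorem:JR-isomorphism} with the isomorphism $\JRAlg(T,\nu) \cong \Alg(R,\eta,\mu)$ of Proposition~\ref{proposition:JR-algebras-as-algebras}, specialised to $\C = \Cat$ with the comprehensive factorisation system and the discrete category comonad. Your extra check that $\Cat$ has pushouts and that $(-)_{0}$ preserves them (being a left adjoint) is a worthwhile explicit verification of the standing hypotheses that the paper leaves implicit.
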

\begin{proof}
Follows directly from Theorem~\ref{theorem:JR-isomorphism} and 
interpreting Proposition~\ref{proposition:JR-algebras-as-algebras} 
in the setting of $\C = \Cat$ equipped with the discrete category 
comonad and the comprehensive factorisation system.
\end{proof}

\begin{corollary}
The forgetful functor $U \colon \Lens \rightarrow \Cat^{\mathbf{2}}$
is strictly monadic. 
\end{corollary}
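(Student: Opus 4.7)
The plan is to reduce the claim to the well-known fact that the Eilenberg--Moore forgetful functor $U^R \colon \Alg(R, \eta, \mu) \rightarrow \Cat^{\mathbf{2}}$ is strictly monadic, by transporting this structure across the isomorphism $\Lens \cong \Alg(R, \eta, \mu)$ established in Theorem~\ref{theorem:delta-lenses-are-algebras}.

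First I would verify that this isomorphism is actually a morphism over $\Cat^{\mathbf{2}}$: that is, that $U^R$ composed with the isomorphism coincides with $U$. Unfolding the composite isomorphism obtained by combining Theorem~\ref{theorem:JR-isomorphism} with Proposition~\ref{proposition:JR-algebras-as-algebras}, a delta lens $(f, \varphi)$ is first sent to a \JR-algebra $(f, p)$ and then to an $R$-algebra $(f, [p, 1_{A}])$; in each case the underlying morphism $f \colon A \rightarrow B$ is preserved, and morphisms of algebras agree with morphisms of lenses on their underlying commutative squares. Hence the triangle formed by $U$, $U^R$, and the isomorphism commutes strictly.

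Next I would invoke the standard fact that for any monad $(R, \eta, \mu)$ on a category, the forgetful functor $U^R$ has a left adjoint (the free-algebra functor $F^R$) and the induced comparison functor into its own Eilenberg--Moore category is the identity; consequently $U^R$ is strictly monadic, with associated monad equal to $(R, \eta, \mu)$. Since strict monadicity is preserved along isomorphisms over the base category, $U$ inherits this property: its left adjoint is obtained by composing $F^R$ with the inverse isomorphism, and the comparison functor for the resulting adjunction is precisely the isomorphism $\Lens \cong \Alg(R, \eta, \mu)$. Concretely, the free $R$-algebra on $f \in \Cat^{\mathbf{2}}$ corresponds under this isomorphism to the free delta lens on $f$ described in Example~\ref{example:free-delta-lens}, which supplies an explicit formula for the left adjoint of $U$. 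No substantial obstacle is anticipated: both steps reduce to routine tracings through the constructions already in place.
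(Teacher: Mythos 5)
Your proposal is correct and matches the paper's intent: the corollary is stated as an immediate consequence of Theorem~\ref{theorem:delta-lenses-are-algebras}, and the argument is exactly the transport of strict monadicity of the Eilenberg--Moore forgetful functor across the isomorphism $\Lens \cong \Alg(R, \eta, \mu)$, which you correctly check is a functor over $\Cat^{\mathbf{2}}$. Your additional observation identifying the left adjoint with the free delta lens of Example~\ref{example:free-delta-lens} is consistent with the paper.
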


\subsection{The free delta lens on a functor}

We now construct a left adjoint to the functor 
$U \colon \Lens \rightarrow \Cat^{\mathbf{2}}$ which defines the 
\emph{free delta lens} on a functor $f \colon A \rightarrow B$. 
This amounts to providing an explicit description of the category 
$Ef$ together with a lifting operation on the functor 
$Rf \colon Ef \rightarrow B$.
First we recall \cite[Corollary~20]{Cla20} the following result 
which represents an delta lens as a certain commutative diagram
(see \cite[Section~2.4]{Cla22} for a detailed proof). 

\begin{proposition}
\label{proposition:delta-lens-representation}
Each delta lens $(f, \varphi) \colon A \rightarrow B$ determines 
a commutative diagram in $\Cat$, as depicted on
the left below, such that $\varphi$ is bijective-on-objects and 
$f\,\varphi$ is a discrete opfibration. 
\begin{equation*}
	\begin{tikzcd}[column sep = small]
	& 
	\Lambda(f, \varphi)
	\arrow[ld, "\varphi"', "\sim"{rotate=45, anchor=north, pos = 0.4}]
	\arrow[rd, tail, "f\,\varphi"]
	&
	\\
	A
	\arrow[rr, "f"']
	& & 
	B
	\end{tikzcd}
\qquad \qquad \qquad
	\begin{tikzcd}[column sep = small]
	& 
	X
	\arrow[ld, "g"', "\sim"{rotate=45, anchor=north, pos = 0.4}]
	\arrow[rd, tail, "f\,g"]
	&
	\\
	A
	\arrow[rr, "f"']
	& & 
	B
	\end{tikzcd}
\end{equation*}
Conversely, each commutative diagram on the right above, 
where $g$ is bijective-on-objects and $fg$ is a discrete opfibration, 
uniquely determines a delta lens structure on $f$. 
\end{proposition}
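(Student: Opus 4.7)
The plan is to realize the equivalence directly by constructing the intermediate category $\Lambda(f, \varphi)$ from the data of a delta lens, and conversely by extracting a lifting operation from a factorisation through a bijective-on-objects functor followed by a discrete opfibration. The guiding observation is that axioms (L2) and (L3) for a lifting operation are precisely the conditions needed to assemble the chosen lifts into a category structure, while axiom (L1) is what makes the composite $f\varphi$ into a discrete opfibration.

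For the forward direction, given a delta lens $(f, \varphi)$, I would define $\Lambda(f, \varphi)$ to have the same objects as $A$, with morphisms $a \rightarrow a'$ given by pairs $(a, u \colon fa \rightarrow b)$ in $B$ such that $p(a, u) = a'$. The identity at $a$ is $(a, 1_{fa})$, well-defined by (L2), and composition is $(p(a, u), v) \circ (a, u) := (a, v \circ u)$, which is associative and unital because composition in $B$ is. The functor $\varphi \colon \Lambda(f, \varphi) \rightarrow A$ is the identity on objects (hence bijective-on-objects) and sends the morphism $(a, u)$ to $\varphi(a, u)$; functoriality is exactly axioms (L2) and (L3). The functor $f\varphi$ sends $(a, u)$ to $u$, and this is a discrete opfibration because $(a, u)$ is the unique morphism out of $a$ lying over $u$; commutativity of the triangle is then axiom (L1).

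For the converse, given a factorisation $f = (fg) \circ g$ with $g \colon X \rightarrow A$ bijective-on-objects and $fg \colon X \rightarrow B$ a discrete opfibration, I would define the lifting operation as follows. Since $g_{0}$ is invertible, each object $a \in A$ corresponds uniquely to an object $\tilde{a} \in X$ with $g\tilde{a} = a$. For each $(a, u \colon fa \rightarrow b)$, let $\overline{fg}(\tilde{a}, u) \colon \tilde{a} \rightarrow \tilde{a}'$ be the unique lift provided by the discrete opfibration structure, and set $\varphi(a, u) := g\,\overline{fg}(\tilde{a}, u)$. Axiom (L1) follows from commutativity of the triangle; (L2) follows because $\overline{fg}(\tilde{a}, 1_{fa}) = 1_{\tilde{a}}$ by uniqueness; and (L3) follows because the composite $\overline{fg}(\tilde{a}', v) \circ \overline{fg}(\tilde{a}, u)$ lies over $v \circ u$ and is therefore equal to $\overline{fg}(\tilde{a}, v \circ u)$, again by uniqueness.

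The two constructions are mutually inverse: starting from $(f, \varphi)$, extracting $g = \varphi$ and applying the converse recovers $\varphi$ by construction, while starting from a triangle, the category $\Lambda(f, \varphi)$ built from the induced lens is canonically isomorphic to $X$ via the map sending $(a, u) \mapsto \overline{fg}(\tilde{a}, u)$ (this is a bijection on objects because $g$ is, and a bijection on morphisms by the universal property of the discrete opfibration). The main subtlety, and the only place where care is genuinely required, is checking that this isomorphism respects composition, which comes down to the same uniqueness argument as in (L3). Since all of these verifications are routine, I would sketch them briefly and defer the complete diagram-chasing to the detailed proof in \cite[Section~2.4]{Cla22}.
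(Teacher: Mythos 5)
Your proposal is correct and follows essentially the same route as the proof the paper defers to in \cite[Section~2.4]{Cla22}: $\Lambda(f,\varphi)$ is built as the category of chosen lifts (objects of $A$, morphisms the pairs $(a,u)$ composed via $(p(a,u),v)\circ(a,u)=(a,v\circ u)$), with (L2)/(L3) giving functoriality of $\varphi$ and (L1) making $f\varphi$ the second projection, hence a discrete opfibration; the converse transports unique lifts along $g_0^{-1}$ exactly as you describe. The mutual-inverse check, including the isomorphism $\Lambda(f,\varphi)\cong X$ via $(a,u)\mapsto\overline{fg}(\tilde a,u)$, is the right way to establish the uniqueness clause, so no gaps remain.
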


\begin{remark}
The above result may be understood as a consequence 
of an equivalence of double categories \cite[Section~3.4]{Cla22}, 
however the details are outside the scope of this paper. 
\end{remark}

Using Proposition~\ref{proposition:delta-lens-representation},
the \emph{free delta lens} on a functor $f \colon A \rightarrow B$ 
corresponds to the following commutative diagram 
in $\Cat$ constructed in \eqref{equation:endofunctor-R-objects}. 
An immediate benefit of this presentation of the free delta lens
is that it condenses the three commutative diagrams 
\eqref{equation:algebra-for-R} for the (free) $R$-algebra to a single
diagram.
\begin{equation*}
	\begin{tikzcd}[column sep = small]
	& 
	Jf
	\arrow[ld, "\alpha_{f}"', "\sim"{rotate=45, anchor=north, pos = 0.4}]
	\arrow[rd, tail, "Tf"]
	&
	\\
	Ef
	\arrow[rr, "Rf"']
	& & 
	B
	\end{tikzcd}
\end{equation*}

In Example~\ref{example:JR-algebras}, we unpacked the definition 
of the category $Jf$ and the discrete opfibration 
$Tf$. 
We now provide an explicit characterisation of the category $Ef$
and the delta lens structure on $Rf \colon Ef \rightarrow B$.  

\begin{example}
\label{example:free-delta-lens}
The objects of $Ef$ 
are pairs $(a \in A, u \colon fa \rightarrow b \in B)$.
The morphisms are generated by pairs 
$\langle w, fw \rangle \colon (a, 1_{fa}) \rightarrow (a', 1_{fa'})$ 
and $\langle 1_{a}, v \rangle \colon (a, u) \rightarrow (a, v \circ u)$ 
for $w \in A$ and $v \in B$, respectively, as depicted below.
The identity morphisms are well-defined since $f(1_{a}) = 1_{fa}$.
As $Jf$ has the same objects as $Ef$ and consists of 
morphisms of the form $\langle 1_{a}, v \rangle$, 
the functor $\alpha_{f} \colon Jf \rightarrow Ef$ 
is identity-on-objects and faithful.  
\begin{equation}
\label{equation:Ef-generators}
    \begin{tikzcd}
    a
    \arrow[r, "w"]
    &
    a'
    \\[-2ex]
    fa
    \arrow[d, "1_{fa}"']
    \arrow[r, "f(w)"]
    &
    fa'
    \arrow[d, "1_{fa'}"]
    \\
    fa
    \arrow[r, "fw"]
    &
    fa'
    \end{tikzcd}
\qquad \qquad
	\begin{tikzcd}
    a
    \arrow[r, "1_{a}"]
    &
    a
    \\[-2ex]
    fa
    \arrow[d, "u"']
    \arrow[r, "{f(1_{a})}"]
    &
    fa
    \arrow[d, "v \, \circ \, u"]
    \\
    b
    \arrow[r, "v"]
    &
    b'
    \end{tikzcd}
\end{equation}
The functor $Rf \colon Ef \rightarrow B$ is projection in the second 
component; on the generators this is given by
$Rf\langle w, fw \rangle = fw$ and $Rf\langle 1_{a}, v \rangle = v$.
The lifting operation on $Rf$ takes an object $(a, u)$ in $Ef$ and a 
morphism $v \colon \cod(u) \rightarrow b$ in $B$ to the chosen lift 
$\langle 1_{a}, v \rangle \colon (a, u) \rightarrow (a, v \circ u)$ 
in $Ef$. 

Although, in principle, the morphisms in $Ef$ are finite sequences of 
the generators \eqref{equation:Ef-generators}, one may show that 
each morphism $(a_{1}, u_{1}) \rightarrow (a_{2}, u_{2})$ 
is actually just one of the following two kinds depicted below:
either a retraction $v$ of $u_{1}$ followed by morphism 
$w \colon a_{1} \rightarrow a_{2}$, or a morphism 
$v \colon \cod(u_{1})~\rightarrow~\cod(u_{2})$ such that 
$v \circ u_{1} = u_{2}$. 
The functor $Rf$ sends these morphisms to $u_{2} \circ fw \circ v$ 
and $v$, respectively. 
\begin{equation*}
    \begin{tikzcd}
    a_{1}
    \arrow[r, equal]
    &
    a_{1}
    \arrow[r, "w"]
    &
    a_{2}
    \arrow[r, equal]
    & 
    a_{2}
    \\[-2ex]
    fa_{1}
    \arrow[d, "u_{1}"']
    \arrow[r, equal]
    \arrow[rd, phantom, "\circlearrowright"]
    &
    fa_{1}
    \arrow[d, "1"']
    \arrow[r, "f(w)"]
    &
    fa_{2}
    \arrow[d, "1"]
    \arrow[r, equal]
    &
    fa_{2}
    \arrow[d, "u_{2}"]
    \\
    b_{1}
    \arrow[r, "v"]
    &
    fa_{1}
    \arrow[r, "fw"]
    &
    fa_{2}
    \arrow[r, "u_{2}"]
    &
    b_{2}
    \end{tikzcd}
\qquad \qquad
	\begin{tikzcd}
    a_{1}
    \arrow[r, equal]
    &
    a_{2}
    \\[-2ex]
    fa_{1}
    \arrow[d, "u_{1}"']
    \arrow[r, equal]
    \arrow[rd, phantom, "\circlearrowright"]
    &
    fa_{2}
    \arrow[d, "u_{2}"]
    \\
    b_{1}
    \arrow[r, "v"]
    &
    b_{2}
    \end{tikzcd}
\end{equation*}
\end{example}

\section{Delta lenses as the R-algebras of an algebraic weak factorisation system} 
\label{section:awfs}

In this section, 
let $(\E, \M)$ be an orthogonal factorisation system on a category $\C$
with (chosen) pushouts, 
and let $(M, \iota)$ be an idempotent comonad on 
$\C$ such that $M \colon \C \rightarrow \C$ preserves pushouts.

\subsection{Constructing the \textsc{awfs} for delta lenses}

Thus far we have constructed a functorial factorisation $(L, E, R)$ 
on $\C$ (Lemma~\ref{lemma:functorial-factorisation}), 
and extended the pointed endofunctor $(R, \eta)$ to a monad 
$(R, \eta, \mu)$ on $\C^{\mathbf{2}}$ 
(Theorem~\ref{theorem:monad-for-lenses}). 
We now show that the copointed endofunctor $(L, \epsilon)$ extends to 
a comonad $(L, \epsilon, \Delta)$, therefore completing 
the data required to describe an algebraic weak factorisation 
system on $\C$. 
For $\C = \Cat$ equipped with the comprehensive factorisation system 
and the discrete category monad, this yields an \textsc{awfs} whose
$R$-algebras are precisely delta lenses. 

First we construct the morphism $L^{2}f \colon A \rightarrow ELf$ as
on the left below. 
Using this diagram and \eqref{equation:endofunctor-R-objects}, 
it follows that 
$TLf \circ SLf = Lf \circ \iota_{A} = \alpha_{f} \circ Sf$ and 
there is solid commutative diagram as on the right below.
By the orthogonality property, there exists a unique morphism 
$\delta_{f} \colon Jf \rightarrow JLf$ as shown. 
\begin{equation}
\label{equation:constructing-comonad}
	\begin{tikzcd}
	MA
	\arrow[r, "\iota_{A}", "\sim"']
	\arrow[d, two heads, "SLf"']
	\arrow[rd, phantom, "\ulcorner", very near end]
	& 
	A
	\arrow[d, two heads, "L^{2}f"'{yshift = 3pt}]
	\arrow[dd, bend left = 50, "Lf"]
	\\
	JLf
	\arrow[r, "\alpha_{Lf}", "\sim"']
	\arrow[d, tail, "TLf"']
	&
	ELf
	\arrow[d, "RLf"']
	\\
	Ef
	\arrow[r, equal]
	&
	Ef
	\end{tikzcd}
\qquad \qquad \qquad
	\begin{tikzcd}
	MA
	\arrow[r, two heads, "SLf"]
	\arrow[d, two heads, "Sf"']
	&
	JLf
	\arrow[d, tail, "TLf"]
	\\
	Jf
	\arrow[r, "\alpha_{f}", "\sim"']
	\arrow[ru, dashed, two heads, "\delta_{f}"]
	&
	Ef
	\end{tikzcd}
\end{equation}

Using the diagrams \eqref{equation:constructing-comonad} and the 
universal property of the pushout, we obtain the component 
$\Delta_{f}$ of the comultiplication $\Delta \colon L \Rightarrow L^{2}$
at $f$ as depicted below.
For each morphism $\langle h, k \rangle \colon f \rightarrow g$ in 
$\C^{\mathbf{2}}$, 
we may show that 
$\Delta_{g} \circ E\langle h, k \rangle =
E\langle h, E\langle h, k \rangle \rangle \circ \Delta_{f}$, 
providing us with a well-defined transformation 
$\Delta \colon L \Rightarrow L^{2}$.
\begin{equation*}
    \begin{tikzcd}
    MA
	\arrow[r, "\iota_{A}", "\sim"']
	\arrow[d, two heads, "Sf"']
	\arrow[rd, phantom, "\ulcorner", very near end]
	& 
	A
	\arrow[d, two heads, "Lf"]
	\arrow[rdd, bend left, two heads, "L^{2}f"]
	&
	\\
	Jf
	\arrow[r, "\alpha_{f}", "\sim"']
	\arrow[rd, two heads, "\delta_{f}"']
	&
	Ef
	\arrow[rd, dashed, two heads, "\Delta_{f}"]
	&
	\\
	&
	JLf
	\arrow[r, "\alpha_{Lf}", "\sim"']
	&
	ELf
    \end{tikzcd}
\qquad = \qquad
	 \begin{tikzcd}
    MA
	\arrow[r, "\iota_{A}", "\sim"']
	\arrow[d, two heads, "Sf"']
	\arrow[rrdd, phantom, "\ulcorner", very near end]
	\arrow[rdd, two heads, bend left, "SLf"]
	& 
	A
	\arrow[rdd, bend left, two heads, "L^{2}f"]
	&
	\\
	Jf
	\arrow[rd, two heads, "\delta_{f}"']
	&
	&
	\\
	&
	JLf
	\arrow[r, "\alpha_{Lf}", "\sim"']
	&
	ELf
    \end{tikzcd}
\end{equation*}
Showing that the diagrams below commute, and thus establishing that the 
comultiplication $\Delta$ 
is counital and coassociative, is a straightforward
application of definitions and the universal property of a pushout.
\begin{equation*}
	\begin{tikzcd}[column sep = large]
	& 
	Ef
	\arrow[ld, equal]
	\arrow[rd, equal]
	\arrow[d, "\Delta_{f}"]
	&
	\\
	Ef
	&
	ELf
	\arrow[l, "RLf"]
	\arrow[r, "{\langle 1_{A},\, Rf\rangle}"']
	& 
	Ef
	\end{tikzcd}
\qquad \qquad \qquad
	\begin{tikzcd}[column sep = large]
	Ef
	\arrow[r, "\Delta_{f}"]
	\arrow[d, "\Delta_{f}"']
	& 
	ELf
	\arrow[d, "\Delta_{Lf}"]
	\\
	ELf
	\arrow[r, "{E\langle 1_{A},\, \Delta_{f}\rangle}"']
	& 
	EL^{2}f
	\end{tikzcd}
\end{equation*}

\begin{proposition}
\label{proposition:comonad-L}
The triple $(L, \epsilon, \Delta)$ is a comonad on $\C^{\mathbf{2}}$. 
\end{proposition}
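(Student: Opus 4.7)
The plan is to verify the three comonad axioms --- naturality of $\Delta$, the two counit laws, and coassociativity --- by exploiting uniqueness from either the orthogonality of $(\E, \M)$ or the universal property of the pushout defining $Ef$. No new construction is needed beyond what has been exhibited in the paragraph above.

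First, I would establish naturality of the auxiliary morphism $\delta_{f} \colon Jf \to JLf$ constructed in \eqref{equation:constructing-comonad}. Since $\delta_{f}$ is the unique filler of an orthogonality square with $Sf \in \E$ on the left and $TLf \in \M$ on the right, given $\langle h, k\rangle \colon f \to g$ in $\C^{\mathbf{2}}$ the two candidates $\delta_{g} \circ J\langle h, k\rangle$ and $J\langle h, E\langle h, k\rangle\rangle \circ \delta_{f}$ both serve as fillers of the analogous square with $Sf$ on the left and $TLg$ on the right: pre-composition with $Sf$ yields $SLg \circ Mh$ for both composites (using the top square of \eqref{equation:endofunctor-T} twice), and post-composition with $TLg$ yields $\alpha_{g} \circ J\langle h, k\rangle$ in each case (using the bottom square of \eqref{equation:endofunctor-T} together with the defining relation \eqref{equation:endofunctor-R-morphisms} of $E\langle h, k\rangle$). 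Uniqueness then gives $\delta_{g} \circ J\langle h, k\rangle = J\langle h, E\langle h, k\rangle\rangle \circ \delta_{f}$, and naturality of $\Delta$ follows from the pushout universal property for $Ef$ applied to both injections.

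For the counit laws I would again appeal to the pushout universal property. The composite $RLf \circ \Delta_{f}$ agrees with $1_{Ef}$ on the injection $Lf$ since $RLf \circ L^{2}f = Lf$ from the defining triangle of $L^{2}f$, and on the injection $\alpha_{f}$ since $RLf \circ \Delta_{f} \circ \alpha_{f} = RLf \circ \alpha_{Lf} \circ \delta_{f} = TLf \circ \delta_{f} = \alpha_{f}$, using \eqref{equation:constructing-comonad} and the factorisation of $Lf \circ \iota_{A}$ underlying $RLf$. The left counit law $E\langle 1_{A}, Rf\rangle \circ \Delta_{f} = 1_{Ef}$ is handled analogously. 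For coassociativity, both $\Delta_{Lf} \circ \Delta_{f}$ and $E\langle 1_{A}, \Delta_{f}\rangle \circ \Delta_{f}$ agree on the $Lf$-injection (where each reduces to $L^{3}f$), and on the $\alpha_{f}$-injection they are both obtained by composing $\delta_{f}$ with a filler of a common orthogonality square having $Sf \in \E$ on the left and $TL^{2}f \in \M$ on the right, so uniqueness supplies the desired equality.

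The main obstacle is bookkeeping rather than conceptual depth: multiple instances of pushouts and orthogonality lifts must be managed simultaneously, alongside the interplay of the morphisms $\iota$, $S$, $T$, $\alpha$, $L$, and $\delta$. The subtlest step is the naturality argument for $\delta$, where one must carefully confirm that the two candidate fillers really do define the same orthogonality square; this requires combining the naturality of $\iota$ at $h$ with the defining identities recorded in \eqref{equation:endofunctor-T} and \eqref{equation:endofunctor-R-morphisms}.
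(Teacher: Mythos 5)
Your argument is correct and takes essentially the same route as the paper: construct $\delta_{f}$ by orthogonality and $\Delta_{f}$ by the pushout, then verify naturality, the counit laws, and coassociativity by checking each identity on the two pushout injections and appealing to uniqueness of orthogonality fillers. You in fact supply more detail than the paper, which records the diagrams and simply asserts that these verifications are straightforward applications of the definitions and the universal property of the pushout.
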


\begin{theorem}
\label{theorem:awfs}
The pair $(L, R)$ is an algebraic weak factorisation system on $\C$. 
\end{theorem}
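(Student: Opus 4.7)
The plan is to assemble the structures already built and verify the single outstanding clause of Definition~\ref{definition:awfs}. Lemma~\ref{lemma:functorial-factorisation} supplies the functorial factorisation $(L, E, R)$; Theorem~\ref{theorem:monad-for-lenses} promotes $(R, \eta)$ to a monad; and Proposition~\ref{proposition:comonad-L} promotes $(L, \epsilon)$ to a comonad. All that remains is clause (iv): exhibiting the distributive law $\lambda \colon LR \Rightarrow RL$ whose components are forced to be $\lambda_{f} = \langle \Delta_{f}, \mu_{f} \rangle$.

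The first step is to check that this formula actually defines a morphism in $\C^{\mathbf{2}}$ from $LRf \colon Ef \rightarrow ERf$ to $RLf \colon ELf \rightarrow Ef$. This reduces to the single equation $\mu_{f} \circ LRf = RLf \circ \Delta_{f}$ of morphisms $Ef \rightarrow Ef$. I would verify this by invoking the universal property of the pushout defining $Ef$ in \eqref{equation:endofunctor-R-objects}: it suffices to check agreement after pre-composition with the two injections $Lf \colon A \rightarrow Ef$ and $\alpha_{f} \colon Jf \rightarrow Ef$. On the $Lf$-component, both sides collapse via the monad unit law $RLf \circ L^{2}f = Lf$ and the corresponding comonad/monad compatibilities at the unit level. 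On the $\alpha_{f}$-component, one unfolds $\Delta_{f}$ through the factor $\delta_{f}$ built in \eqref{equation:constructing-comonad}, and $\mu_{f}$ through the factor $\nu_{f}$ built in \eqref{equation:multiplication-mu}; the two sides then coincide by orthogonality against the $(\E,\M)$-factorisation of $Tf$, since both correspond to the unique filler of the same lifting problem.

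Naturality of $\lambda$ in $f$ is immediate from the naturality of $\Delta$ and $\mu$ already shown. The remaining distributive-law axioms — the two unit conditions with respect to $\epsilon$ and $\eta$, and the two pentagons with respect to $\Delta$ and $\mu$ — each project onto the codomain slot as a trivial identity, and project onto the domain slot as an equation of morphisms landing in $Ef$ (or iterates thereof). Every such equation can again be tested by pre-composition with the two pushout injections, at which point it reduces either to an axiom already established for $(L, \epsilon, \Delta)$ and $(R, \eta, \mu)$, or to a consequence of the fact that $\alpha \colon T \Rightarrow R$ and $\delta$ are morphisms of the relevant semi-monad/comonad data.

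The main obstacle is notational rather than conceptual: the object $Ef$ carries several morphisms in and out of it — the pushout injections $Lf$ and $\alpha_{f}$, the iterated factorisation maps $\alpha_{Lf}$, $\alpha_{Rf}$, and counit components $\iota_{Ef}$ — all of which appear simultaneously in the diagrams witnessing the pentagon. Keeping track of which pushout universality is being invoked, and repeatedly reducing to the already-verified semi-monad laws for $(T, \nu)$ through the natural transformation $\alpha \colon T \Rightarrow R$, is where the bookkeeping becomes delicate. Once organised around the recipe ``test each equation on the two pushout injections, then apply orthogonality,'' the proof becomes a routine, if lengthy, exercise in diagram-chasing.
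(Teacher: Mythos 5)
Your proposal is correct and follows essentially the same route as the paper: the data of the \textsc{awfs} is assembled from Lemma~\ref{lemma:functorial-factorisation}, Theorem~\ref{theorem:monad-for-lenses}, and Proposition~\ref{proposition:comonad-L}, and the distributive law with components $\lambda_f = \langle \Delta_f, \mu_f\rangle$ is verified by routine diagram-chasing against the pushout injections $Lf$ and $\alpha_f$ together with orthogonality — exactly the strategy the paper leaves implicit. (Indeed, on the $\alpha_f$-leg both composites reduce directly to $\alpha_f$ via $\mu_f \circ LRf = 1_{Ef}$ and $TLf \circ \delta_f = \alpha_f$, so the check is even slightly easier than you anticipate.)
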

\begin{proof}
The data of the algebraic weak factorisation system 
follows from Lemma~\ref{lemma:functorial-factorisation}, 
Theorem~\ref{theorem:monad-for-lenses}, and 
Proposition~\ref{proposition:comonad-L}. 
Checking that there is a distributive law 
$\lambda \colon LR \Rightarrow RL$ of the comonad $L$
over the monad $R$ with components 
$\lambda_{f} = \langle \Delta_{f}, \mu_{f} \rangle$ involves  
routine diagram-chasing and applying universal properties.
\end{proof}

\subsection{Coalgebras and lifting}

A \emph{coalgebra} $(f, q)$ for the comonad $(L, \epsilon, \Delta)$ 
consists of a pair of morphisms $f \colon A \rightarrow B$ and 
$q \colon B \rightarrow Ef$ such that the following diagrams commute: 
\begin{equation*}
	\begin{tikzcd}
	A
	\arrow[d, "f"']
	\arrow[r, two heads, "Lf"]
	&
	Ef
	\arrow[d, "Rf"]
	\\
	B
	\arrow[r, equal]
	\arrow[ru, "q"]
	&
	B
	\end{tikzcd}
\qquad \qquad
	\begin{tikzcd}[column sep = large]
	B
	\arrow[r, "q"]
	\arrow[d, "q"']
	&
	Ef
	\arrow[d, "\Delta_{f}"]
	\\
	Ef
	\arrow[r, "{E\langle 1_{A},\, q \rangle}"']
	&
	ELf
	\end{tikzcd}
\end{equation*}

\begin{remark}
In contrast to the algebras for the monad $(R, \eta, \mu)$, 
the coalgebras above cannot be easily simplified since $q$ is a morphism 
\emph{into} a pushout. 
For $\C = \Cat$, one may show that for a functor $f$ to admit a 
coalgebra structure, it must be a left-adjoint-right-inverse 
(\textsc{lari})
and is therefore also injective-on-objects and fully faithful. 
A complete characterisation of the $L$-coalgebras is left for future
work. 
\end{remark}

We now provide a simple diagrammatic proof that delta lenses, 
in the form of Proposition~\ref{proposition:delta-lens-representation}
rather than as $R$-algebras, lift against $L$-coalgebras. 
Consider a morphism $\langle h, k \rangle \colon f \rightarrow g$ 
such that $(f, q)$ is an $L$-coalgebra and $(g, \psi)$ is a delta lens.
Since $\psi$ is bijective-on-objects, $\psi_{0}$ is invertible, 
and there is a morphism 
$\iota_{\Lambda} \circ \psi_{0}^{-1} \circ h_{0} \colon A_{0} \rightarrow \Lambda(g, \psi)$ making the diagram, depicted below, commute. 
Then by the orthogonality property, there exists a unique morphism 
$\ell \colon Jf \rightarrow \Lambda(g, \psi)$ such that 
$\ell \circ Sf = \iota_{\Lambda} \circ \psi_{0}^{-1} \circ h_{0}$
and $g \circ \psi \circ \ell = k \circ Rf \circ \alpha_{f}$. 
Finally, by the universal property of the pushout, there exists a
unique morphism $[\psi \circ \ell, h] \colon Ef \rightarrow C$. 
Thus, there is a specified morphism 
$q \circ [\psi \circ \ell, h] \colon B \rightarrow C$ as on the left 
below.
\begin{equation*}
	\begin{tikzcd}[column sep = large, row sep = large]
	A
	\arrow[r, "h"]
	\arrow[d, "f"']
	&
	C
	\arrow[d, "g"]
	\\
	B
	\arrow[r, "k"']
	\arrow[ru, "{q \,\circ\, [\psi \,\circ\, \ell,\, h]}" description]
	&
	D
	\end{tikzcd}
\qquad \qquad \qquad 
	\begin{tikzcd}
	& 
	& 
	\Lambda(g, \psi)
	\arrow[d, "\psi", "\sim"{rotate=90, anchor = south}]
	\arrow[dd, bend left = 50, tail, "g \,\circ\, \psi"]
	\\[-1ex]
	A_{0}
	\arrow[rru, bend left = 20, dashed,
	"\iota_{\Lambda} \,\circ\, \psi_{0}^{-1} \,\circ\, h_{0}"{pos = 0.2}]
	\arrow[r, "\iota_{A}", "\sim"']
	\arrow[d, two heads, "Sf"']
	\arrow[rd, phantom, "\ulcorner", very near end]
	&
	A
	\arrow[r, "h"]
	\arrow[d, two heads, "Lf"]
	&
	C
	\arrow[d, "g"]
	\\[-1ex]
	Jf
	\arrow[r, "\alpha_{f}", "\sim"']
	&
	Ef
	\arrow[ru, dashed, "{\psi \circ l,\, h}" description]
	\arrow[r, "k \,\circ\, Rf"']
	&
	D
	\end{tikzcd}
\end{equation*}
Therefore we have shown that delta lenses lift against functors with 
the structure of a $L$-coalgebra, which is stronger than one would 
expect from their simple axiomatic definition.   
It also demonstrates how the notion of lifting is intrinsic to delta
lenses as the $R$-algebras of an \textsc{awfs}. 
The sequential composition of delta lenses as $R$-algebras may also be 
defined from this notion of lifting against $L$-coalgebras, 
providing further clarification of this essential operation. 

\section{Concluding remarks and future work}
\label{section:conclusion}

In this paper, we have shown that delta lenses are algebras for a 
monad $(R, \eta, \mu)$, and that this monad arises from an algebraic
weak factorisation system on $\Cat$. 
Moreover, we have shown that this \textsc{awfs} exists on any suitable
category  equipped with an orthogonal factorisation system and 
an idempotent comonad which preserves pushouts. 
These results generalise immediately to \emph{internal lenses}
\cite{Cla20, TAC} using the internal comprehensive factorisation system
\cite{SV10}, however an analogous result for \emph{enriched lenses} \cite{CDi22} or \emph{weighted lenses} \cite{Per21} 
is unknown.
There are many avenues for future work.
One example is the relationship between the 
\emph{proxy pullbacks} \cite{DiM21} of delta lenses and the canonical
pullback of $R$-algebras~\cite{BG16}.
Another is the connection between spans of delta lenses \cite{Cla21} 
and the categories of weak maps for an \textsc{awfs}~\cite{BG16II}.  
The \emph{double category of delta lenses} \cite{Cla22}, 
which is naturally induced by the \textsc{awfs},
provides a rich setting studying the properties
of delta lenses previously considered in a $1$-categorical setting 
\cite{CCJSWZ22, DiM22}. 

\textbf{Acknowledgements.}
This research first appeared in Chapter~6 of my PhD thesis 
\cite{Cla22}, and I would like to thank my supervisor, 
Michael Johnson, for his helpful feedback during my PhD studies.
I am also very grateful to Richard Garner who first sketched the 
construction of the free delta lens, and also suggested the approach 
using \textsc{awfs}, after I presented this work at the Australian 
Category Seminar. 
I would also like to thank 
John Bourke, Matthew Di Meglio, Tim Hosgood, and Noam Zeilberger for
sharing useful insights which contributed to the development of 
this research. 

\bibliographystyle{eptcs}
\bibliography{ACT2023.bib}

\appendix
\section{Appendix}
\label{section:appendix}

In this section, we provide a proof of  
Theorem~\ref{theorem:JR-isomorphism}. 
The correspondence between \JR-algebras and delta lenses 
was first shown by Johnson and Rosebrugh \cite[Proposition~3]{JR13};
we reprove this correspondence in our notation, and extend it 
to an isomorphism of categories. 
We refer the reader to Example~\ref{example:JR-algebras} for an 
explicit description of the category $Jf$ and the functor 
$Tf \colon Jf \rightarrow B$. 

\begin{theorem}
If $\C = \Cat$ equipped with the discrete category comonad and the 
comprehensive factorisation system, then there is an isomorphism 
of categories $\Lens \cong \JRAlg(T, \nu)$. 
\end{theorem}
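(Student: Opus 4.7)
The plan is to construct explicit, mutually inverse functors
\[
F \colon \Lens \longrightarrow \JRAlg(T, \nu) \qquad \text{and} \qquad G \colon \JRAlg(T, \nu) \longrightarrow \Lens,
\]
using the concrete description of $Jf$ and $Tf$ given in Example~\ref{example:JR-algebras}. For $F$, I would send a delta lens $(f, \varphi)$ to the pair $(f, p)$ where $p \colon Jf \to A$ is defined on objects by $p(a, u) = \cod \varphi(a, u)$ and on generating morphisms $\langle 1_a, v \rangle \colon (a, u) \to (a, v \circ u)$ by $p\langle 1_a, v\rangle = \varphi(p(a,u), v)$. Functoriality of $p$ reduces to axiom~(L3); the first algebra square $f \circ p = Tf$ is exactly axiom~(L1); and the \JR-condition $p \circ Sf = \iota_A$ unpacks, on objects, to $p(a, 1_{fa}) = \cod(1_a) = a$, which is axiom~(L2).

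For $G$, I would send a \JR-algebra $(f, p)$ to the lifting operation $\varphi(a, u) := p\langle 1_a, u \rangle$, viewed as a morphism $a \to p(a, u)$ in $A$ by identifying its source $p(a, 1_{fa})$ with $a$ via the \JR-condition. Then (L1) is immediate from $f \circ p = Tf$, (L2) is immediate from the \JR-condition, and (L3) follows from the associativity square for $p$. The two constructions are visibly inverse on objects, since $\varphi$ is determined by its values on generators of $Jf$ and the data of $(f, p)$ is freely generated (as a functor on $Jf$) by its effect on these generators together with the \JR-normalisation.

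The main obstacle will be verifying that the algebra associativity square translates precisely to axiom~(L3). This requires unpacking $\nu_f$ concretely: objects of $JTf$ are triples $(a, u, v)$ with $u \colon fa \to b$ and $v \colon b \to b'$, and $\nu_f$ acts by $(a, u, v) \mapsto (a, v \circ u)$, while $J\langle p, 1_B \rangle$ acts by $(a, u, v) \mapsto (p(a, u), v)$. The identity $p \circ \nu_f = p \circ J\langle p, 1_B \rangle$ on such an object then reads $\cod \varphi(a, v \circ u) = \cod \varphi(p(a, u), v)$, with matching morphism data enforcing $\varphi(a, v \circ u) = \varphi(p(a, u), v) \circ \varphi(a, u)$, precisely~(L3). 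The verification is elementary but notationally heavy.

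Finally, I would extend the object-level bijection to morphisms. Using that $J \langle h, k \rangle$ sends $(a, u) \mapsto (ha, ku)$, the \JR-algebra morphism equation \eqref{equation:T-algebra-morphism} reduces on objects to $h \circ p = q \circ J\langle h, k \rangle$, which by construction of $p$ and $q$ is equivalent to $\cod (h \varphi(a, u)) = \cod \psi(ha, ku)$ with the matching morphism identity $h \varphi(a, u) = \psi(ha, ku)$ — that is, Definition~\ref{definition:category-Lens}. Functoriality of $F$ and $G$ and their mutual inversion on morphisms is then routine, yielding the required isomorphism of categories.
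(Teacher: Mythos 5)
Your proposal is correct and follows essentially the same route as the paper's own proof in the appendix: the same assignments $p\langle 1_a, v\rangle = \varphi(p(a,u),v)$ and $\varphi(a,u) = p\langle 1_a, u\rangle$, the same matching of (L1)/(L2)/(L3) with the algebra square, the \JR-condition, and the associativity square via the explicit description of $JTf$ and $\nu_f$, and the same treatment of morphisms.
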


We prove this theorem in two parts: first defining the 
functor $\Lens \rightarrow \JRAlg(T, \nu)$, then defining the 
functor $\JRAlg(T, \nu) \rightarrow \Lens$ and showing 
that they are mutually inverse. 

\begin{proof}
We begin by constructing a functor $\Lens \rightarrow \JRAlg(T, \nu)$. 

Given a delta lens $(f, \varphi) \colon A \rightarrow B$ as in 
Definition~\ref{definition:delta-lens}, 
we define a functor $p \colon Jf \rightarrow B$ whose assignment on 
morphisms $\langle 1_{a}, v \rangle \colon (a, u_{1}) \rightarrow (a, u_{2})$ is given below, where we recall that $p(a, u) = \cod(\varphi(a, u))$.  
\begin{equation}
\label{equation:lens-to-algebra}
	\begin{tikzcd}
    a
    \arrow[r, equal]
    &
    a
    \\[-2ex]
    fa
    \arrow[d, "u_{1}"']
    \arrow[r, equal]
    \arrow[rd, phantom, "\circlearrowright"]
    &
    fa
    \arrow[d, "u_{2}"]
    \\
    b_{1}
    \arrow[r, "v"]
    &
    b_{2}
    \end{tikzcd}
\qquad \longmapsto \qquad
	\begin{tikzcd}[column sep = huge]
	p(a, u_{1})
	\arrow[r, "{\varphi(p(a,\, u_{1}),\, v)}"]
	&
	p(a, u_{2})
	\end{tikzcd}
\end{equation}
This functor preserves identities and composition by the axioms 
\ref{(L2)} and \ref{(L3)} of a delta lens, respectively.
Moreover, the equation $f \circ p = Tf$ from the left diagram of 
\eqref{equation:semi-monad-algebra} is satisfied by axiom \ref{(L1)}.
The equation $p \circ Sf = \iota_{A}$ from the 
diagram \eqref{equation:JR-algebra} also holds since 
$Sf(a) = (a, 1_{fa})$ and $p(a, 1_{fa}) = a$ by axiom \ref{(L2)}.

To verify the remaining condition for a \JR-algebra given by the 
right diagram of \eqref{equation:semi-monad-algebra}, we first 
describe the category $JTf$ and the functors 
$\nu_{f}, \langle p, 1_{B} \rangle \colon JTf \rightarrow Jf$.

The category $JTf$ has objects given by triples 
$(a \in A, u \colon fa \rightarrow b, u' \colon b \rightarrow b')$ 
and morphisms given by triples $\langle 1_{a}, 1_{b}, v \rangle$ as 
depicted below. 
The functor $\nu_{f}$ has an assignment on objects 
$(a, u, u') \mapsto (a, u' \circ~u)$ 
and an assignment on morphisms 
$\langle 1_{a}, 1_{b}, v \rangle \mapsto \langle 1_{a}, v \rangle$,
while the functor $\langle p, 1_{B} \rangle$ has corresponding assignments on objects and morphisms given by 
$(a, u, u') \mapsto (p(a, u), u')$ and
$\langle 1_{a}, 1_{b}, v \rangle \mapsto \langle 1_{p(a, u)}, v \rangle$
which are well-defined by \ref{(L1)}. 
The equation $p \circ \mu_{f} = p \circ \langle p, 1_{B}\rangle$ holds 
since $p(a, u' \circ u) = p(p(a, u), u')$ by axiom \ref{(L3)}.
Therefore, we have a \JR-algebra $(f, p)$ and the functor 
$\Lens \rightarrow \JRAlg(T, \nu)$ is well-defined on objects.
\begin{equation}
	\begin{tikzcd}
    a
    \arrow[r, equal]
    &
    a
    \\[-2ex]
    fa
    \arrow[d, "u"']
    \arrow[r, equal]
    &
    fa
    \arrow[d, "u"]
    \\
    b
    \arrow[r, equal]
    \arrow[d, "u_{1}'"']
    \arrow[rd, phantom, "\circlearrowright"]
	&
	b
	\arrow[d, "u'_{2}"]
	\\
	b'_{1}
    \arrow[r, "v"]
    &
    b'_{2}
    \end{tikzcd}
\end{equation}

Consider a pair of delta lenses $(f, \varphi) \colon A \rightarrow B$
and $(g, \psi) \colon C \rightarrow D$ with corresponding 
\JR-algebras $(f, p)$ and $(g, q)$, respectively.
Given a morphism of delta lenses $\langle h, k \rangle \colon (f, \varphi) \rightarrow (g, \psi)$, we want to show that there is a 
morphism of \JR-algebras 
$\langle h, k \rangle \colon (f, p) \rightarrow (g, q)$. 
First note that the functor 
$J\langle h, k \rangle \colon Jf \rightarrow Jg$ has an assignment on
objects $(a, u) \mapsto (ha, ku)$ and an assignment on morphisms
$\langle 1_{a}, v \rangle \mapsto \langle 1_{ha}, kv \rangle$.
As we have 
$h\varphi(a, u) = \psi(ha, ku)$ by the definition of a morphism
of delta lenses, it follows that 
$hp(a, u) = \cod(h\varphi(a, u)) = \cod(\psi(ha, ku)) = q(ha, ku)$. 
A similar argument on morphisms of $Ef$ establishes that 
$q \circ J\langle h, k\rangle = h \circ p$ and thus the 
equation \eqref{equation:T-algebra-morphism} for a 
morphism of \JR-algebras holds. 
\end{proof}

\begin{proof}
We now construct a functor $\JRAlg(T, \nu) \rightarrow \Lens$ 
and show that it is inverse to $\Lens \rightarrow \JRAlg(T, \nu)$. 

Given a \JR-algebra determined by the pair of functors 
$f \colon A \rightarrow B$ and $p \colon Jf \rightarrow A$, 
we define a delta lens $(f, \varphi) \colon A \rightarrow B$ 
whose lifting operation $\varphi$ is given below, 
where $p(a, 1_{fa}) = a$ by \eqref{equation:JR-algebra}. 
\begin{equation}
\label{equation:algebra-to-lens}
	\begin{tikzcd}
    a
    \arrow[r, equal]
    &
    a
    \\[-2ex]
    fa
    \arrow[d, "1_{fa}"']
    \arrow[r, equal]
    &
    fa
    \arrow[d, "u"]
    \\
    fa
    \arrow[r, "u"]
    &
    b
    \end{tikzcd}
\qquad \longmapsto \qquad
	\begin{tikzcd}[column sep = huge]
	p(a, 1_{fa}) = a 
	\arrow[r, "{p\langle 1_{a},\, u\rangle}"]
	&
	p(a, u)
	\end{tikzcd}
\end{equation}
By \eqref{equation:JR-algebra} on morphisms, 
it follows that axiom \ref{(L2)} for a delta lens holds. 
By the left diagram of~\eqref{equation:semi-monad-algebra},
it is also immediate that axiom \ref{(L1)} holds. 
For axiom \ref{(L3)} to hold, we need to show that
\[
	p\langle 1_{a}, v \circ u \rangle 
	= p\langle 1_{a}, v \rangle \circ p \langle 1_{a}, u \rangle
	= p\langle 1_{p(a, u)}, v \rangle \circ p\langle 1_{a}, u \rangle.
\]
This amounts to proving that the morphism 
$p\langle 1_{a}, v \rangle \colon p(a, u) \rightarrow p(a, v \circ u)$
is equal to the morphism 
$p\langle 1_{p(a, u)}, v \rangle \colon p(p(a, u), 1_{b}) 
\rightarrow p(p(a, u), v)$, 
which follows directly from the right diagram in 
\eqref{equation:semi-monad-algebra}. 

Given a morphism of \JR-algebras 
$\langle h, k \rangle \colon (f, p) \rightarrow (g, q)$, 
we have $h p\langle 1_{a}, u \rangle = q\langle 1_{ha}, ku \rangle$ 
from~\eqref{equation:T-algebra-morphism}.
Therefore there is a well-defined 
morphism $\langle h, k \rangle$ between the corresponding delta lenses.

To show that the functors $\Lens \rightarrow \JRAlg(T, \nu)$ and 
$\JRAlg(T, \nu) \rightarrow \Lens$ are inverse, it is enough to 
show it holds on the objects as the morphisms consist of the same data.

First consider a delta lens $(f, \varphi)$ and define a functor 
$p \colon Jf \rightarrow B$ as in \eqref{equation:lens-to-algebra}. 
Applying this functor at a morphism 
$\langle 1_{a}, u \rangle \colon (a, 1_{fa}) \rightarrow (a, u)$ in
$Jf$, we obtain $\varphi(p(a, 1_{fa}), u) = \varphi(a, u)$ by 
\ref{(L2)} as desired.
Now consider a \JR-algebra $(f, p)$ and define a lifting operation 
$\varphi$ for a delta lens as in \eqref{equation:algebra-to-lens}. 
Defining a functor $\hat{p} \colon Jf \rightarrow A$ from this 
delta as in \eqref{equation:lens-to-algebra} and applying it to a 
morphism 
$\langle 1_{a}, v \rangle \colon (a, u_{1}) \rightarrow (a, u_{2})$ 
we find that $\hat{p}\langle 1_{a}, v \rangle = 
p\langle 1_{p(a,u)}, v \rangle = p\langle 1_{a}, v \rangle$ by the 
right diagram in \eqref{equation:semi-monad-algebra} as desired. 
This completes the proof. 
\end{proof}

\end{document}